\newtheorem{thm}{Theorem}[section]
\newtheorem{lem}[thm]{Lemma}
\theoremstyle{definition}
\theoremstyle{remark}
\newtheorem{rem}[thm]{Remark}
\numberwithin{equation}{section}
\newcommand{\beas}{\begin{eqnarray*}}
\newcommand{\eeas}{\end{eqnarray*}}
\newcommand{\bes} {\begin{equation*}}
\newcommand{\ees} {\end{equation*}}
\newcommand{\be} {\begin{equation}}
\newcommand{\ee} {\end{equation}}
\newcommand{\bea} {\begin{eqnarray}}
\newcommand{\eea} {\end{eqnarray}}
\newcommand{\ra} {\rightarrow}
\newcommand{\txt} {\textmd}
\newcommand{\R} {\mathbb{R}}
\newcommand{\C} {\mathbb{C}}
\newcommand{\N}{\mathbb{N}}
\newcommand{\T}{\mathbb{T}}
\begin{document}

\title[A local Levinson theorem for compact symmetric spaces] {A local Levinson theorem for compact symmetric spaces}

\author{Mithun Bhowmik}

\address{Department of Mathematics, Indian Institute of Technology, Bombay; India.}

\email{mithun@math.iitb.ac.in, mithunbhowmik123@gmail.com}

\thanks{The author is supported by the Post-Doctoral Fellowship from Indian Institute of Technology, Bombay; India.}


\begin{abstract}
A classical result due to Levinson characterizes the existence of non-zero functions defined on a circle vanishing on an open subset of the circle in terms of the pointwise decay of their Fourier coefficients \cite{L2}. We prove certain analogue of this result on compact symmetric spaces.
\end{abstract}

\subjclass[2010]{Primary 43A85; Secondary 53C35, 33C55.}

\keywords{Riemannian symmetric space, Fourier transform, Levinson's theorem.}

\maketitle

\section{Introduction}
In this article, we will concern ourselves with the classical problem of determining the re-
lationship between the speed of decay of the Fourier coefficients of an integrable function at
infinity and the size of support of the function in the context of a Riemannian symmetric space of the compact type. To understand the context better let
us consider the following well known statement: 
suppose $f$ is an integrable function on the circle group and the Fourier coefficients $\mathcal F f(m)$ of $f$  
satisfies the following condition
\be\label{introdecay}
|\mathcal F f(m)| \leq C e^{-\delta |m|}, \:\:\:\ \txt{ for all } m \leq  -1,
\ee  
for some positive number $\delta$. If $\mathcal F f(m)\in l^1(\mathbb Z)$, then $f$ is almost everywhere equals to the boundary function of
\bes
g(z)= \sum_{m\in \mathbb Z} \mathcal F f(m) ~ z^m,
\ees
which is clearly analytic in the annulus $A_\delta=\{z\in \C : e^{-\delta} < |z| < 1\}$, and extends continuously to the unit circle. Therefore, $f$ 
cannot be equivalent to zero over any interval of positive length without being entirely
equivalent to zero. This simple observation motivates people to endeavour for a more optimal decay of the Fourier transform for such a conclusion. For instance, suppose that we have a function $f$ whose Fourier coefficients $\mathcal F f(m)$ satisfies the estimate
\bes
|\mathcal F f(m)|\leq Ce^{\frac{-|m|}{\log(|m|)}}, \:\:\:\: \txt{ for all } m \leq -1,
\ees
which is a slower decay compare to (\ref{introdecay}).
Then $f$ may no longer be the boundary function of an analytic function. The question now arises whether or not $f$ has the property of vanishing completely if it vanishes over any interval of positive length. The first result of this kind due to Levinson is the following.
\begin{thm}[\cite{L2}, Theorem I]. \label{introlev}
Let $\psi:\N \ra [0, \infty)$ be a non-decreasing function such that 
\bes
\sum_{m\in \N} \frac{\psi(m)} {m^2} = \infty.
\ees
Suppose $f\in L^1(-\pi, \pi)$ is such that for all $m \in \mathbb Z$ with $m\leq  -1$
\be \label{levdecay}
|\mathcal F f(m)| \leq Ce^{- \psi(|m|)},
\ee
for some positive $C$. Then $f$ cannot be equivalent to zero on any interval of positive length without being equivalent to zero over $(-\pi, \pi)$. 
\end{thm}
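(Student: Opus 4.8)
The plan is to pass to the analytic/co-analytic decomposition of $f$, to encode the problem in a Cauchy transform that is holomorphic off the complement of the vanishing arc, and then to force that transform to vanish identically by a Levinson-type majorant argument in which the divergence of $\sum_m\psi(m)/m^2$ plays the role of a quasi-analyticity condition; Privalov's uniqueness theorem for functions of bounded type is the elementary tool that frames the reduction.

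First, some routine reductions. After a rotation we may assume $f$ vanishes on an open arc $I$ of the circle $\T$ containing $1$; and after replacing $\psi$ by a suitable non-decreasing minorant we may assume $\psi$ is as regular as convenient (for instance $t\mapsto\psi(e^t)$ convex), since such a replacement preserves the divergence of $\sum_m\psi(m)/m^2$ and only weakens hypothesis~\eqref{levdecay}. Decompose $f=\Phi_++\Phi_-$, where $\Phi_+(\theta)=\sum_{n\geq0}\mathcal F f(n)e^{in\theta}$ and $\Phi_-(\theta)=\sum_{n\leq-1}\mathcal F f(n)e^{in\theta}$; being the analytic and co-analytic projections of an $L^1$ function, these are the boundary values of $W_+(z)=\sum_{n\geq0}\mathcal F f(n)z^n$ and $W(z)=\sum_{m\geq1}\mathcal F f(-m)z^m$, both lying in $\bigcap_{p<1}H^p(\mathbb D)$, hence in the Smirnov class $N^+(\mathbb D)$.

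Now form the Cauchy transform
\[
G(z)=\frac{1}{2\pi i}\int_{\T}\frac{f(\zeta)}{\zeta-z}\,d\zeta=
\begin{cases}W_+(z),&|z|<1,\\[2pt]-W(1/z),&|z|>1,\end{cases}
\qquad G(\infty)=0 .
\]
Because $f$ vanishes on $I$, the interior and exterior pieces of $G$ have matching boundary values there, so $G$ extends holomorphically across $I$; consequently $G$ is holomorphic on $\widehat{\C}\setminus K$, where $K=\T\setminus I$ is a proper closed sub-arc, and the jump of $G$ across $K$ equals $f|_K$. It therefore suffices to prove $G\equiv0$, for then $f|_K=0$, and together with $f|_I=0$ this gives $f\equiv0$ over $(-\pi,\pi)$. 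Here the decay hypothesis enters: from the exterior expansion $G(z)=-\sum_m\mathcal F f(-m)z^{-m}$ one sees that on the circle $|z|=1+\varepsilon$ the tangential derivatives obey $\|\partial_\theta^k G\|_\infty\le C\sum_m m^k e^{-\psi(m)}(1+\varepsilon)^{-m}$, whose growth in $k$ as $\varepsilon\to0^+$ is governed by $\psi$; the resulting bound on the one-sided boundary regularity of $G$ along $K$ reflects a quasi-analyticity phenomenon whose threshold is exactly $\sum_m\psi(m)/m^2=\infty$ — this is where the Denjoy--Carleman circle of ideas, together with the Legendre duality between $\psi$ and the sequence $\big(\sup_m m^k e^{-\psi(m)}\big)_k$, comes in. Following Levinson, one then constructs an outer function $\mathcal O$ — most conveniently after mapping $\widehat{\C}\setminus K$ conformally onto a half-plane — whose boundary modulus is comparable to the decay/smoothness profile above, so that $G\mathcal O$ admits a harmonic majorant; a Phragm\'en--Lindel\"of (two-constants) estimate then shows $G\mathcal O$ is small near $K$, and Privalov's theorem, combined with $G(\infty)=0$, forces $G\equiv0$.

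The reductions in the first two paragraphs, the conformal normalisation, and the uses of Privalov's theorem are standard. I expect the main obstacle to be the construction of the auxiliary function $\mathcal O$ together with the quantitative verification that the borderline of the Phragm\'en--Lindel\"of step is exactly the divergence of $\sum_m\psi(m)/m^2$ — equivalently, the quasi-analyticity of the associated class — since a $\psi$ for which $\sum_m\psi(m)/m^2$ converges (a slower decay) would leave room for a nonzero such $G$; this matching of conditions, and the sharpness it entails, is the genuine content of Levinson's theorem.
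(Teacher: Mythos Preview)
The paper does not prove this theorem; Theorem~\ref{introlev} is quoted from Levinson's original paper \cite{L2} purely as background and motivation, and no argument for it is supplied anywhere in the present article. Consequently there is no ``paper's own proof'' against which to compare your proposal.

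As to the proposal itself: the strategic outline is the classical one --- split $f$ into its analytic and co-analytic projections, form the Cauchy transform $G$, continue $G$ holomorphically across the vanishing arc so that it lives on $\widehat{\C}\setminus K$, and then kill $G$ by a Phragm\'en--Lindel\"of/quasi-analyticity argument driven by the decay of the negative coefficients. The reductions and the role of Privalov's theorem are correctly identified. However, you explicitly leave the decisive step as an expectation rather than an argument: the construction of the auxiliary outer function $\mathcal O$ (or equivalently the harmonic majorant) and the verification that the Phragm\'en--Lindel\"of step succeeds precisely when $\sum_m \psi(m)/m^2=\infty$ is the entire content of Levinson's theorem, and your write-up offers only a pointer to ``the Denjoy--Carleman circle of ideas'' and Legendre duality without carrying any of it out. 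What you have, then, is an accurate road map of the classical proof but not a proof; to complete it you would need either to reproduce Levinson's original majorant construction or to invoke a ready-made quasi-analyticity theorem (e.g.\ of Beurling--Carleman type) and check that its hypothesis matches the divergence of the series.
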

Levinson go over to Fourier transforms of functions defined on the real line instead of Fourier series for the functions on circle and have exactly the same result there. These results were refined by Beurling (\cite{K}, Ch. VII, \S B). He proved that under the conditions above $f$ cannot even vanish on a set of positive measure unless $f$ is equvalent to zero. There is a whole body of literature  \cite{HJ, I, K, PW, Sh} devoted to the study of the trade off between the nature of the set on which a function vanishes and the  allowable decay of its Fourier transform. Most of these results deal with functions defined on the circle or on the real line. Very recently, we have extended few results of this genre in the context of higher dimenional Euclidean spaces and on certain classes of noncommutative Lie groups and the corrsponding homogeneous spaces \cite{BRS, BR, BS1}. For the $d$-dimensional  torus $\T^d$, $d \geq 1$, we have the following generalization of Theorem \ref{introlev}. If $f\in L^1(\T^d)$ we then define its Fourier coefficient $\mathcal Ff(m)$ by the formula
\be \label{ftdefn}
\mathcal F f(m)=\int_{\T^d}f(x)e^{- im \cdot x}dx, \:\:\:\: \txt{ for } m\in\mathbb Z^d.
\ee
\begin{thm}[\cite{BRS}, Theorem 3.3]\label{levtorusthm}
Let $\psi: [0,\infty) \to [0,\infty)$ be a non-decreasing function and we set
\bes
S=\sum_{n\in \N} \frac{\psi(n)}{n^2}.
\ees
\begin{enumerate}
\item[(a)] Let $f\in L^1(\T^d)$ be such that 
\be \label{levdecay-torus}
|\mathcal F f(m)| \leq C e^{-\psi(\|m\|)}, \:\:\:\: \txt{ for all } m \in \mathbb Z^d.
\ee
If $f$ vanishes on any nonempty open set $U\subset \T^d$ and $S$ is infinte then $f$ is the zero function.
\item[(b)] If $S$ is finite then there exists a non-zero $f \in L^1(\T^d)$ supported on any given nonempty open set $U \subset \T^d$ satisfying (\ref{levdecay-torus}).  
\end{enumerate}
\end{thm}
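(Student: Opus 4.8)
The plan is to reduce both parts to their one-dimensional counterparts by slicing in the coordinate directions, treating Theorem~\ref{introlev} and (for (b)) the classical converse construction of Ingham (\cite{I}, see also \cite{K}) as black boxes.

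\emph{Part (a).} I would argue by induction on $d$. The case $d=1$ is precisely Theorem~\ref{introlev}: a nonempty open subset of $\T$ is an interval of positive length, the two-sided decay in (\ref{levdecay-torus}) is stronger than the one-sided decay (\ref{levdecay}), and $S=\infty$ is the required divergence. For the inductive step, note that $U$ contains a box $Q=\prod_{j=1}^{d}(a_j,b_j)$, so $f$ vanishes a.e. on $Q$. \textbf{Move 1 (box to slab).} For $\phi\in C_c^\infty((a_d,b_d))$ set $f^\phi(x')=\int_{\T}f(x',t)\phi(t)\,dt$ for $x'\in\T^{d-1}$. Expanding $\phi$ in its Fourier series, and using $\widehat\phi\in\ell^1(\mathbb Z)$ together with $\|(m',k)\|\ge\|m'\|$ and the monotonicity of $\psi$, one gets $|\mathcal F f^\phi(m')|\le C_\phi\,e^{-\psi(\|m'\|)}$ for all $m'\in\mathbb Z^{d-1}$; moreover $f^\phi$ vanishes a.e. on the box $\prod_{j=1}^{d-1}(a_j,b_j)\subset\T^{d-1}$ because $\operatorname{supp}\phi\subset(a_d,b_d)$. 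By the inductive hypothesis $f^\phi\equiv0$; letting $\phi$ run over a countable family dense in $C_c^\infty((a_d,b_d))$ and using Fubini, one concludes that $f=0$ a.e. on the slab $\T^{d-1}\times(a_d,b_d)$. \textbf{Move 2 (slab to everything).} For $\mu\in\mathbb Z^{d-1}$ put $G_\mu(t)=\int_{\T^{d-1}}f(x',t)e^{-i\mu\cdot x'}\,dx'$; then $\mathcal F G_\mu(m)=\mathcal F f((\mu,m))$, hence $|\mathcal F G_\mu(m)|\le C e^{-\psi(|m|)}$ for all $m\in\mathbb Z$, while $G_\mu$ vanishes a.e. on $(a_d,b_d)$ by Move 1. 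Theorem~\ref{introlev} now forces $G_\mu\equiv0$, i.e. $\mathcal F f((\mu,m))=0$ for every $(\mu,m)\in\mathbb Z^{d-1}\times\mathbb Z=\mathbb Z^{d}$, so $f\equiv0$. This closes the induction.

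\emph{Part (b).} Fix a (possibly small) box $\prod_{j=1}^{d}(c_j-r,c_j+r)\subset U$. The condition $S<\infty$ is equivalent, by the standard integral comparison for non-decreasing $\psi$, to $\int_1^\infty\psi(t)t^{-2}\,dt<\infty$; consequently $\Psi(t):=\psi(\sqrt{d}\,t)$ also satisfies $\int_1^\infty\Psi(t)t^{-2}\,dt<\infty$, so by Ingham's construction (\cite{I}; see also \cite{K}) there is a nonzero $h\in C_c^\infty((-r,r))$ with $|\widehat h(\xi)|\le Ce^{-\Psi(|\xi|)}$ for all $\xi\in\R$. Define $f(x)=\prod_{j=1}^{d}h(x_j-c_j)$; this is a nonzero smooth function on $\T^d$ supported in the chosen box, hence in $U$, and by (\ref{ftdefn}) we have $\mathcal F f(m)=\prod_{j=1}^{d}e^{-im_jc_j}\,\widehat h(m_j)$, so
\be
|\mathcal F f(m)|\le C^{d}\prod_{j=1}^{d}e^{-\psi(\sqrt{d}\,|m_j|)}\le C^{d}e^{-\psi(\sqrt{d}\,\max_{j}|m_j|)}\le C^{d}e^{-\psi(\|m\|)},
\ee
the last step using $\sqrt{d}\,\max_{j}|m_j|\ge\|m\|$ and the monotonicity of $\psi$. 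Thus $f$ has all the required properties.

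The Fubini and density bookkeeping in Move 1 and the comparison of $S$ with the integral are routine. The genuine content is imported: Levinson's one-dimensional theorem in part (a), and in part (b) the one-dimensional existence of a compactly supported function with prescribed sub-exponential Fourier decay. The latter is the only genuinely hard ingredient, and it is classical; everything else is a bookkeeping of slices.
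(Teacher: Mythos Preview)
The paper does not prove Theorem~\ref{levtorusthm}: it is quoted verbatim from \cite{BRS} (Theorem~3.3) and then used as a black box in the proof of Theorem~\ref{mainlem}. There is therefore no in-paper argument to compare yours against.

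That said, your argument is correct. In part~(a) the induction is sound: Move~1 legitimately passes from a box to a slab because $\|(m',k)\|\ge\|m'\|$ preserves the decay under the partial Fourier transform and the countable-density/Fubini step is routine; Move~2 then collapses the last variable via the one-dimensional Levinson theorem, which applies since the two-sided decay you have implies the one-sided decay in Theorem~\ref{introlev}. In part~(b) the tensor construction with $\Psi(t)=\psi(\sqrt d\,t)$ and the inequality $\sqrt d\max_j|m_j|\ge\|m\|$ gives exactly the required estimate; the one-dimensional Ingham/Paley--Wiener input you cite is the same classical ingredient the paper itself invokes in the proof of Theorem~\ref{lemconverse}. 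One cosmetic point: you implicitly need $r<\pi$ so that $h\in C_c^\infty((-r,r))$ defines a function on $\T$ whose torus Fourier coefficients agree with $\widehat h|_{\mathbb Z}$, but since the box may be shrunk this is harmless.
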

It is therefore natural to explore the possibility of extending Theorem \ref{introlev} to more general Riemannian manifolds. Recently, we have obtained an anlogue of Theorem \ref{introlev} for Riemannian symmetric spaces of non-compact type \cite{BR}. Our aim in this paper is to do the same for Riemannian symmetric spaces of compact type which is dual of noncompact symmetric spaces.  The following is our main result in this paper. We  refer the reader to section 3 for the meaning of the symbols.  
\begin{thm}\label{thmsym}
Let $\psi: [0,\infty) \to [0,\infty)$ be a non-decreasing function and we set
\bes
S=\sum_{n\in \N} \frac{\psi(n)}{n^2}.
\ees
There exists $R>0$ such that the following holds for each $0<r<R$:
\begin{enumerate}
\item[(a)] Suppose $f\in C^\infty(U/K)$ and its support is contained in $Exp (\overline{\mathcal B(o, r)})$. Let the Fourier transform $\widetilde f$ of $f$ satisfies the following estimate 
\be \label{levdecay}
|\widetilde f(s\mu, k)| \leq C e^{-\psi(\|\mu\|)}, \:\:\:\: \txt{ for } \mu \in \Lambda^+(U/K), k\in K,  s\in W.
\ee
If the series $S$ is infinite then $f$ is the zero function.
\item[(b)] If $S$ is finite then there exists a non-zero $f \in C^\infty(U//K)$ supported on any given nonempty open set $V\subset Exp(\overline{\mathcal B(o, r)})$ satisfying the estimate
\be \label{levdecayconv}
|\widehat f(\mu)|\leq Ce^{-\psi(\|\mu\|)},
\ee 
for all $\mu\in \Lambda^+(U/K)$.
\end{enumerate}
\end{thm}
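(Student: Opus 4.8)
The plan is to transfer the problem to the flat torus $\T^d$, $d=\dim\mathfrak a$, and invoke Theorem \ref{levtorusthm}. The constant $R$ is to be chosen small enough that $Exp$ is a diffeomorphism of $\mathcal B(o,R)$ onto a geodesic ball and that, read off in Cartan (polar) coordinates $x=Exp(X)$, this ball lies inside the interior of a single fundamental domain for the affine Weyl group acting on $\mathfrak a$. With such an $R$, a $K$-bi-invariant function on $U/K$ supported in $Exp(\overline{\mathcal B(o,r)})$ becomes, after the substitution $x=Exp(X)$ and symmetrization over $W$, a $W$-invariant function on $\T^d=\mathfrak a/\Gamma$ supported in a nonempty proper open set, i.e.\ one that vanishes on a nonempty open subset of $\T^d$; this matching of hypothesis and conclusion with Theorem \ref{levtorusthm} is exactly what makes the statement local.

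For part (a) I would first dispose of the $K$-bi-invariant case, where $\widetilde f(s\mu,k)=\widehat f(\mu)$ is independent of $s$ and $k$, so that the hypothesis reads $|\widehat f(\mu)|\le Ce^{-\psi(\|\mu\|)}$ for $\mu\in\Lambda^+(U/K)$. The spherical functions $\varphi_\mu$ restricted to the maximal torus are Heckman--Opdam (Jacobi) polynomials, and there is a transmutation operator $\mathcal A$, built from the density $\delta(X)=\prod_\alpha|\sin\langle\alpha,X\rangle/2|^{m_\alpha}$ together with the triangular intertwiner relating Jacobi polynomials to ordinary characters of $\T^d$ (in rank one this is the Abel transform, equivalently the Mehler--Dirichlet formula), which sends $f$ to a $W$-invariant $g\in C^\infty(\T^d)$ whose support is no larger than the image of $\mathrm{supp}\,f$ and whose Fourier coefficients satisfy $|\mathcal Fg(m)|\le C(1+\|m\|)^N|\widehat f(\mu_m)|$, where $\mu_m\in\Lambda^+(U/K)$ is the dominant weight attached to $m\in\mathbb Z^d$ and $\|\mu_m\|\asymp\|m\|$. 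Since replacing $\psi$ by $\psi+N\log(1+\cdot)$, after a routine regularization so that this stays non-negative and non-decreasing, leaves $S$ divergent, Theorem \ref{levtorusthm}(a) forces $g\equiv0$, hence $f\equiv0$.

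For a general $f\in C^\infty(U/K)$ I would reduce to the bi-invariant case using that $K$ fixes $o$, so the ball $\overline{\mathcal B(o,r)}$ is $K$-invariant: every left $K$-translate of $f$, hence each $K$-type component $f_\delta=f\ast_K\overline{\chi_\delta}$, is again supported in $Exp(\overline{\mathcal B(o,r)})$. Expanding $f$ into its $U$-irreducible constituents $V_\mu$ and pairing against the generalized spherical functions expresses each $f_\delta$ through data dominated by the $\widetilde f(s\mu,k)$; running the transmutation argument inside each fixed $K$-type (or majorizing generalized spherical functions by ordinary ones) kills every component, whence $f\equiv0$. For part (b), with $S<\infty$, I would run the transference backwards: choose an open subset of $\mathfrak a$ mapping into the open set attached to $V$, apply Theorem \ref{levtorusthm}(b) with $\psi$ replaced by $\psi+N\log(1+\cdot)$ (still summable) to obtain a nonzero $W$-invariant $g\in C^\infty(\T^d)$ supported there with $|\mathcal Fg(m)|\le Ce^{-\psi(\|m\|)-N\log(1+\|m\|)}$, and apply the inverse transmutation $\mathcal A^{-1}$, of the same triangular, support-preserving type (differentiation or fractional integration in rank one), to produce a nonzero $f\in C^\infty(U//K)$ supported in $V$; the polynomial loss in passing $\mathcal Fg(m)\mapsto\widehat f(\mu)$ is absorbed by the surplus factor, giving $|\widehat f(\mu)|\le Ce^{-\psi(\|\mu\|)}$ for all $\mu\in\Lambda^+(U/K)$.

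The main obstacle is the transference step itself: constructing the transmutation operator that intertwines spherical analysis on $U//K$ with ordinary Fourier analysis on $\T^d$, proving that it and its inverse do not enlarge small supports --- the ``triangularity'' of Abel/Radon-type transforms --- and keeping exact track of the polynomial factors and of the index correspondence $\mu\leftrightarrow m$, together with the choice of $R$ small enough that the geodesic ball fits inside one fundamental domain, which is what converts ``compactly supported in a small ball on $U/K$'' into ``vanishing on a nonempty open subset of $\T^d$''. The $K$-covariant reduction in part (a) is a secondary, bookkeeping-heavy, point.
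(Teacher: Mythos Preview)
Your plan is a genuinely different route from the paper's, and the difference is instructive.

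\textbf{Where you and the paper diverge.} You propose to work on the \emph{space} side: build a transmutation/Abel-type operator $\mathcal A$ carrying $C^\infty_r(U//K)$ into $W$-invariant functions on $\T^d$ with no enlargement of support and only polynomial loss on the spectral side, then apply Theorem~\ref{levtorusthm} directly. The paper instead works purely on the \emph{spectral} side: it invokes the local Paley--Wiener theorem of \'Olafsson--Schlichtkrull (Theorem~\ref{pwthm}) to extend $\widehat f$ to an entire function of exponential type strictly less than $\pi$ in coordinates given by the fundamental weights, and then appeals to a Carlson-type result (Theorem~\ref{mainlem}) for entire functions decaying on $\mathbb Z^d$, which is in turn proved from Theorem~\ref{levtorusthm} via the Euclidean Paley--Wiener theorem and Poisson summation. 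In other words, the paper absorbs your ``main obstacle'' --- construction of $\mathcal A$, its support-nonexpansion, and the polynomial bookkeeping --- into the single black box of Theorem~\ref{pwthm}; the choice of $R$ is exactly the one making the exponential type less than $\pi$ (Remark~\ref{Rdefn}). For part~(b) the paper again stays on the spectral side: it builds the required entire function directly (Theorem~\ref{lemconverse}) and pulls it back through the inverse Paley--Wiener map, rather than constructing $g$ on $\T^d$ and applying $\mathcal A^{-1}$.

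\textbf{On the reduction from general $f$ to bi-invariant $f$.} Your $K$-type decomposition is workable in principle but heavier than necessary, and the sentence ``running the transmutation argument inside each fixed $K$-type (or majorizing generalized spherical functions by ordinary ones)'' hides real work: the components $f_\delta$ are not bi-invariant, so you still need a $\delta$-spherical version of your transference. The paper's reduction is shorter and worth knowing: if $f\not\equiv 0$, a heat-kernel argument (Lemma~\ref{nonzeroradiallem}) produces $u_0$ close to $o$ with $\mathcal S(l_{u_0}f)\not\equiv 0$; an explicit computation using the complexified Iwasawa projection then shows $\widehat{\mathcal S(l_{u_0}f)}(\mu)=\int_K \widetilde f(-\bar\mu+2\rho,k)\,e^{\bar\mu(H(u_0^{-1}k))}\,dk$, which inherits the decay (\ref{levdecay}) up to a harmless $\rho$-shift, and the bi-invariant case gives the contradiction.

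\textbf{Assessment.} Your outline is coherent and would succeed in rank one, where the Abel transform and its inverse with the required support and growth properties are classical; in higher rank the existence of $\mathcal A$ with exactly the properties you list on a compact symmetric space is essentially equivalent in difficulty to the local Paley--Wiener theorem you would be reproving. So the gap you yourself flag is real, and the paper's approach is precisely the way to close it without building $\mathcal A$ from scratch.
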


\begin{rem}
\begin{enumerate}
\item[i)] We put restriction on the function $f$ to be supported in a ball around the origin. In this sense Theorem \ref{thmsym} is a local Levinson's theorem for Riemannian symmetric space of the compact type.

\item[ii)] We note that in Theorem \ref{introlev} the decay of the Fourier transform was assumed only in one direction, that is around negative infinity. But in Theorem \ref{thmsym}, (i) the decay of the Fourier transform is uniform in all directions. It is not clear to us whether it is possible to prove an analogue of Theorem \ref{introlev} by assuming the decay of Fourier transform only in {\it some directions}. We refer the reader to \cite{Sh}, Theorem A$'$, where an analogous issue has been addressed for the Euclidean spaces $\R^d$. 
\end{enumerate}
\end{rem}

\section{Complex analytic Theorems}
In this section, we shall prove two results of complex analysis which will be useful throughout. The first one is the following.
\begin{thm} \label{mainlem}
Let $\psi: [0,\infty) \mapsto [0, \infty)$ be a non-decreasing function. Suppose $f$ is an entire function on $\C^d,~ d\geq 1$ and satisfies the following properties: 
\begin{enumerate}
\item[i)] There exists a constant $A$ with $0 < A < \pi$ such that 
\bes 
|f(z)| \leq Ce^{A \|z\|}, \:\:\:\: \txt{ for all } z\in \C^d.
\ees 
\item[ii)]  Restriction of $f$ to $\mathbb Z^d$   satisfies the following estimate
\be \label{mainlemdecay}
|f(m)| \leq Ce^{-\psi(\|m\|)}, \:\:\:\:  \txt{ for all } m\in \mathbb Z^d.
\ee 
\end{enumerate}
If the function $\psi$ satisfies the condition
\be \label{intcond}
\sum_{n\in \N} \frac{\psi(n)}{n^2}=\infty,
\ee
then $f$ vanishes identically on $\C^d$.
\end{thm}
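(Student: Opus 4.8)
The plan is to reduce Theorem \ref{mainlem} to the classical one-dimensional Levinson theorem, which I will use as a black box in the following form: \emph{if $\phi$ is an entire function of exponential type strictly less than $\pi$, bounded on $\mathbb{R}$, and $|\phi(n)|\le Ce^{-\psi(|n|)}$ for all $n\in\mathbb{Z}$, where $\psi$ is non-decreasing with $\sum_{n}\psi(n)/n^{2}=\infty$, then $\phi\equiv 0$} (see \cite{L2} and \cite{K}). I will also invoke two elementary classical facts about entire functions of one complex variable: an entire function of exponential type $<\pi$ that is bounded on $\mathbb{Z}$ is automatically bounded on $\mathbb{R}$ (a Plancherel--P\'olya/sampling phenomenon special to type strictly below $\pi$), and, as a consequence, an entire function of exponential type $<\pi$ vanishing at every integer is identically zero.

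\emph{Step 1 (restriction to coordinate lines through lattice points).} Fix $m'=(m_{2},\dots,m_{d})\in\mathbb{Z}^{d-1}$ and set $g_{m'}(w)=f(w,m')$ for $w\in\mathbb{C}$. By (i), $|g_{m'}(w)|\le C\exp\big(A\sqrt{|w|^{2}+\|m'\|^{2}}\big)\le Ce^{A\|m'\|}e^{A|w|}$, so $g_{m'}$ is entire of exponential type $\le A<\pi$. By (ii), for every $n\in\mathbb{Z}$ we have $\|(n,m')\|\ge|n|$, hence, $\psi$ being non-decreasing, $|g_{m'}(n)|=|f(n,m')|\le Ce^{-\psi(\|(n,m')\|)}\le Ce^{-\psi(|n|)}$. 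Thus $g_{m'}$ has type $<\pi$ and is bounded on $\mathbb{Z}$, so it is bounded on $\mathbb{R}$, and the one-dimensional Levinson theorem together with (\ref{intcond}) forces $g_{m'}\equiv 0$. Since $m'$ was arbitrary, $f$ vanishes on $\mathbb{C}\times\mathbb{Z}^{d-1}$.

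\emph{Step 2 (bootstrap in the remaining variables).} I will show by induction on $j$ that $f$ vanishes on $\mathbb{C}^{j}\times\mathbb{Z}^{d-j}$ for every $1\le j\le d$; the case $j=1$ is Step 1 and the case $j=d$ is the assertion of the theorem. For the inductive step, assume the statement for some $j<d$ and fix $(z_{1},\dots,z_{j})\in\mathbb{C}^{j}$ and $(m_{j+2},\dots,m_{d})\in\mathbb{Z}^{d-j-1}$. The function $\phi(w)=f(z_{1},\dots,z_{j},w,m_{j+2},\dots,m_{d})$ is entire in $w$ and, by (i), of exponential type $\le A<\pi$; moreover for each $n\in\mathbb{Z}$ the point $(z_{1},\dots,z_{j},n,m_{j+2},\dots,m_{d})$ lies in $\mathbb{C}^{j}\times\mathbb{Z}^{d-j}$, so by the inductive hypothesis $\phi(n)=0$. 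An entire function of type $<\pi$ vanishing at every integer is identically zero, so $\phi\equiv 0$; letting the frozen parameters vary shows that $f$ vanishes on $\mathbb{C}^{j+1}\times\mathbb{Z}^{d-j-1}$, completing the induction. Taking $j=d$ yields $f\equiv 0$ on $\mathbb{C}^{d}$.

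The substantive analytic content is carried entirely by the one-dimensional Levinson theorem; the passage to $\mathbb{C}^{d}$ is soft. The point requiring care — and, I expect, the only genuine subtlety — is that the decay estimate (\ref{mainlemdecay}) is available \emph{only} on the integer lattice, so one is not free to restrict $f$ to an arbitrary complex line and retain the decay: the line must pass through lattice points, and by itself this controls $f$ merely on a union of complex hyperplanes. Step 2 is precisely what upgrades that partial information to vanishing on all of $\mathbb{C}^{d}$, and there the strict inequality $A<\pi$ (exponential type strictly below $\pi$ along each coordinate line) is exactly what makes the vanishing-at-integers argument work. If one prefers not to cite the one-dimensional Levinson theorem, the case $d=1$ of Theorem \ref{mainlem} is the hard case and would need its own proof (via Levinson's logarithmic-integral argument, or by relating $f$ on $\mathbb{R}$ to a function on the circle and applying Theorem \ref{introlev}), after which Steps 1 and 2 apply verbatim.
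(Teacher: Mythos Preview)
Your argument is correct and takes a genuinely different route from the paper's. The paper proves the theorem in all dimensions at once: after showing (as you do, one coordinate at a time via Cartwright) that $f$ is bounded on $\R^d$, it multiplies $f$ by a rapidly decaying factor $\mathcal F\phi(hz)$ with $h=(\pi-A)/2$ so that the product $g$ is the Fourier transform of some $G\in C_c^\infty(\R^d)$ supported in $B(0,A')\subset(-\pi,\pi)^d$, periodizes $G$ to the torus, and applies the $d$-dimensional torus Levinson theorem (Theorem~\ref{levtorusthm}) to the resulting function, whose Fourier coefficients are $g(m)$ by Poisson summation and inherit the decay (\ref{mainlemdecay}). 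You instead reduce everything to dimension one: apply the one-variable entire-function Levinson theorem along each coordinate line $w\mapsto f(w,m')$ to kill $f$ on $\C\times\mathbb Z^{d-1}$, and then peel off the remaining variables with Carlson's theorem. Your route is arguably more elementary---it needs only the $d=1$ case of Theorem~\ref{levtorusthm} together with Carlson, rather than the full torus result---whereas the paper's approach packages the passage to the torus once and for all and makes transparent why the strict inequality $A<\pi$ is needed (it is exactly the support condition that leaves a gap on $\T^d$). The one slightly loose point in your write-up is the black-box citation of the entire-function form of the one-dimensional theorem: Levinson's original statement is for $L^1$ functions on the circle, and deducing the exponential-type version still requires the Paley--Wiener/Poisson-summation step that the paper spells out (including the regularizing multiplication to ensure the inverse transform is a genuine function); you acknowledge this in your final paragraph, so the gap is only in the attribution, not in the mathematics.
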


\begin{rem}
\begin{enumerate}
\item[i)] For $d=1$, the theorem above can be thought of as a generalization of Carlson's theorem (see \cite{B}, P.153); and therefore, it is of independent interest.

\item[ii)] The assumption that $A< \pi$ is crucial. The function $f(z) = \sin \pi z$ is of exponential type $\pi$ and vanishes on $\mathbb Z$.
\end{enumerate}
\end{rem}
The main idea behind the proof of the theorem above is to use Theorem \ref{levtorusthm}. Precisely, we shall construct a function $F$ in $L^1(\T^d)$ out of the given function $f$ such that $F$ satisfies the hypothesis (\ref{levdecay-torus}) of Theorem \ref{levtorusthm} and vanishes on a nonempty open set of $\T^d$. This can be done using the Paley-Wiener theorem. We shall first prove the following result.
\begin{lem} \label{cartlem}
Suppose $f$ is an entire function on $\C^d$ and satisfies the estimate
\bes
|f(z)| \leq Ce^{A \|z\|}, \:\: \txt{ for all } z\in \C^d,
\ees 
for some positive real number $A$ less than $\pi$. If $f$ is bounded on $\mathbb Z^d$, then it is bounded on whole  $\R^d$.
\end{lem}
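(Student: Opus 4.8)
The plan is to deduce Lemma \ref{cartlem} from a one-variable statement, since the multidimensional case follows by restricting to complex lines and using the known one-variable theory. First I would reduce to $d=1$ as follows: fix a point $x \in \R^d$ and consider the entire function of one complex variable $g(\zeta) = f(\zeta x / \|x\|)$ (when $x \neq 0$); this satisfies $|g(\zeta)| \le C e^{A|\zeta|}$, so it is of exponential type at most $A < \pi$. The difficulty with this naive slicing is that $g$ need not be bounded on $\Z$ — the lattice points of $\R^d$ do not in general lie on the line through $x$. So instead I would argue coordinate-by-coordinate: prove the one-variable lemma first, then bound $f$ on $\R \times \Z^{d-1}$, then on $\R^2 \times \Z^{d-2}$, and so on, at each stage freezing all but one variable at integer points, applying the one-variable result in the remaining free variable, and crucially checking that the bound obtained is \emph{uniform} in the frozen integer parameters.

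For the one-variable case, the classical tool is a theorem of Cartwright (see \cite{B}, Theorem 10.2.11 or thereabouts): if $g$ is entire of exponential type $\tau < \pi$ and $\{g(n)\}_{n\in\Z}$ is bounded, then $g$ is bounded on all of $\R$. Rather than invoke it as a black box I would sketch the proof via the interpolation formula: for $g$ of exponential type $\tau<\pi$ that is bounded on $\Z$, one has an expansion of $g$ in terms of its values $g(n)$ against the cardinal series kernels $\frac{\sin\pi(\zeta-n)}{\pi(\zeta-n)}$ (or a suitable variant with a convergence factor such as $\frac{\sin^2}{(\cdot)^2}$ to ensure absolute convergence), and summing the resulting series on the real axis gives a bound of the form $|g(x)| \le C' \sup_n |g(n)| \cdot \sup_{x} \sum_n \left| \frac{\sin\pi(x-n)}{\pi(x-n)} \right|^{?}$, which is finite precisely because $\tau < \pi$ leaves room for the interpolation to converge. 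The condition $A<\pi$ is exactly what makes the cardinal series representation valid and is the reason (noted in Remark ii) that it cannot be dropped.

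Now I would run the induction on $d$. Suppose the lemma holds in dimension $d-1$; given $f$ entire on $\C^d$ with $|f(z)|\le Ce^{A\|z\|}$ and $|f(m)|\le C$ for $m\in\Z^d$, fix $m' = (m_1,\dots,m_{d-1}) \in \Z^{d-1}$ and set $h_{m'}(\zeta) = f(m_1,\dots,m_{d-1},\zeta)$. This is entire of exponential type $\le A < \pi$ in $\zeta$ with $|h_{m'}(n)| \le C$ for all $n\in\Z$, so by the one-variable lemma $|h_{m'}(t)| \le C$ for all $t\in\R$, with the same constant $C$ (this uniformity is the whole point — the one-variable bound depends only on the type $A$ and the sup over $\Z$, both of which are uniform in $m'$). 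Hence $|f(m_1,\dots,m_{d-1},t)|\le C$ for all $t\in\R$ and all $m'\in\Z^{d-1}$. But for each fixed real $t$, the function $m' \mapsto f(m',t)$ extends to an entire function on $\C^{d-1}$ of exponential type $\le A<\pi$, bounded on $\Z^{d-1}$ by $C$; the induction hypothesis gives $|f(x',t)|\le C$ for all $x'\in\R^{d-1}$. Since $t\in\R$ was arbitrary, $f$ is bounded by $C$ on all of $\R^d$.

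The main obstacle is the one-variable step and, within the induction, making absolutely sure the bound is uniform in the frozen parameters; once that is pinned down the induction is mechanical. A secondary technical point is the passage from the exponential-type bound $|f(z)|\le Ce^{A\|z\|}$ on $\C^d$ to the statement that each partial slice (in any single coordinate, or in the first $d-1$ coordinates with one coordinate frozen at a real value) is of exponential type strictly less than $\pi$: this follows immediately from $\|z\| \le \|z_1 e_1 + \dots\| $ and monotonicity, since freezing real coordinates only adds a bounded amount to the norm and the free part has norm $\le$ (modulus of the free variables), so the type is still $\le A<\pi$ — but one should state this comparison carefully to avoid an off-by-$\pi$ error of the kind flagged in the remark.
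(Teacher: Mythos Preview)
Your approach is essentially the same as the paper's: both reduce to the one-variable Cartwright theorem and then propagate through the coordinates one at a time (the paper iterates the $d=1$ case variable-by-variable, you package this as an induction on $d$, which amounts to the same thing). Your write-up is in fact more careful than the paper's on the key point you flag---uniformity of the resulting bound in the frozen integer parameters---which the paper glosses over but which is indeed what makes the coordinate-wise argument go through.
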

\begin{proof}
For $d=1$, the result is due to M. L. Cartwright \cite{BU}. Here we shall prove the result for $d>1$ by reducing it to the one dimensional case. We fix $l'\in \mathbb Z^{d-1}$ and define 
\bes
F_1(z_1) = f(z_1, l'), \:\:\:\: \txt{ for } z_1 \in \C.
\ees
Clearly, $F_1$ is an entire function on $\C$ of exponential type less than or equals to $A$, which is further less than $\pi$. Also, by hypothesis $F_1$ is bounded on the set of integers and hence by the case $d=1$, it is bounded on $\R$. We now fix $x_1\in \R, l''\in \mathbb Z^{d-2}$ and define the function 
\bes
F_2(z_2)= f(x_1, z_2, l''), \:\:\:\: \txt{ for } z_2\in \C.
\ees
By the same argument $F_2$ is bounded on $\R$ and continuing the process for $d$ times we conclude that $f$ is bounded on $\R^d$. 
\end{proof}

\begin{proof}[Proof of Theorem \ref{mainlem}]
Let us choose $\phi \in C_c^\infty(\R^d)$ with support of $\phi$ is contained in $B(0, 1) \subset \R^d$ such that $\mathcal F \phi(0)=1$. Here $\mathcal F\phi(\xi)$ denotes the Euclidean Fourier transform of $\phi$ defined as in (\ref{ftdefn}) for $\xi$ in $\R^d$ and integrating over $\R^d$. Also $B(0, 1)$ denotes the open ball in $\R^d$ of radius one centred at zero. Clearly, $\mathcal F\phi$ extends as an entire function on $\C^d$. Let $h= \frac{\pi-A}{2}>0$. We now define 
\be \label{gdefn}
g(z) = f(z) ~ \mathcal F \phi(hz), \:\:\:\: \txt{ for } z\in \C^d.
\ee
Clearly, $g$ is an entire function of exponential type less or equal to $A'=A+h< \pi$. By the hypothesis (i), (ii) and Lemma \ref{cartlem}, it follows that $f$ is bounded on $\R^d$. Consequently, from the property of the function $\mathcal F \phi$ and the definition of $g$ we get that for each $N\in \N$ there exists $C_N$ such that
\bes
|g(x)|\leq \frac{C_N}{ (1+\|x\|)^N},\:\:\:\: \txt{ for all } x\in \R^d.
\ees
A standard application of Phragm\'en-Lindel\"off theorem shows that $g$ also satisfies the following estimate (\cite{G}, Lemma 2).
\bes
|g(z)|\leq C_N \frac{e^{A'\|\Im z\|}}{ (1+\|z\|)^N},\:\:\:\: \txt{ for all } z\in \C^d.
\ees 
Hence by the classical Paley-Wiener theorem $g= \mathcal F G$, for some $G\in C_c^\infty(\R^d)$ such that support of $G$ is contained in $B(0, A') \subset \T^d$ (\cite{H2}, Theorem 2.9, p. 15). We now define 
\bes
F(x) = \sum_{m\in \mathbb Z^d} G(x+m), \:\:\:\: \txt{ for all }
x\in \T^d \simeq [-\pi, \pi]^d.
\ees
Then $F\in L^1(\T^n)$ and by Possion summation formula (\cite{SW}, Theorem 2.4. p 251)
\be \label{psformula}
\mathcal FF(m) = \mathcal F G(m) = g(m),\:\:\:\: \txt{ for all }
m\in \mathbb Z^d.
\ee
Since $0< A'< \pi$ and $F$ is supported in the ball $B(0, A')$, therefore $F$ vanishes on an open set in $\T^d$. It now follows from the hypothesis (ii)  and the equality (\ref{psformula}) that 
\bes
|\mathcal F F(m)| = |g(m)|= |f(m)| ~| \mathcal F \phi(hm)| \leq C_{\phi, h} ~ e^{- \psi(\|m\|)}, \:\:\:\: \txt{ for all }
m\in \mathbb Z^d.
\ees
Since $\psi$ is non-decreasing and the series (\ref{intcond}) is divergent, therefore, by Theorem \ref{levtorusthm} applying to the function $F$, we get that $F$ is identically zero. Hence $G$ as well as $g$ vanishes identically on $\R^d$. Since $\mathcal F \phi$ is real analytic function on $\R^d$, therefore non-zero almost everywhere and hence $f$ is zero almost everywhere on $\R^d$.  By continuity, $f$ is zero on $\R^d$ and hence so is on $\C^d$. 
\end{proof}

As a converse of the Theorem \ref{mainlem} we also have the following
\begin{thm} \label{lemconverse}
Let $\psi$ be as in Theorem \ref{mainlem} and the series 
\be\label{seriescovgt}
\sum_{n\in \N}\frac{\psi(n)}{n^2}< \infty.
\ee
Let $b\in \R^d$. Then for each positive number $A$ there exists a non-zero entire function $f_b$ (depending on $b$) on $\C^d$ satisfying the following
\begin{enumerate} 
\item[i)] For each $N\in \N$ there exists a positive constant $C_N$ such that 
\bes
|f_b(z)|\leq C_N\frac{e^{A\|\Im z\|}}{1+\|z\|^N}, \:\:\:\:  \txt{ for all } z\in \C^d. 
\ees
\item[ii)]The restriction of $f_b$ on $\R^d$ satisfies
\bes
|f_b(x)|\leq Ce^{-\psi(\|x\|)}, \:\:\:\: \txt{for } x\in \R^d.
\ees
\item[iii)] For all $\sigma \in S^{d-1}$ and $z\in \C^d$
\bes
f_b\left(\sigma(z+b)-b\right)= f(z).
\ees
Here $S^{d-1}$ denotes the unit sphere in $\R^d$ centred at zero.
\end{enumerate}
\end{thm}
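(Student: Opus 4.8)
The plan is to produce $f_b$ as a translate of an infinite product of elementary rotation-invariant entire functions --- the classical Levinson--Beurling device --- with the parameters of the product calibrated against the convergence of $\sum_n\psi(n)/n^2$. First I would pass to the slightly larger non-decreasing majorant $\psi_b(r):=\psi(r+\|b\|+1)$; since $\psi$ is non-decreasing one still has $\sum_n\psi_b(n)/n^2<\infty$ (compare with a shifted copy of the series), so it suffices to construct a non-zero $O(d)$-invariant entire function $\Phi$ on $\C^d$ enjoying property (i) and with $|\Phi(x)|\le Ce^{-\psi_b(\|x\|)}$ on $\R^d$, and then to set $f_b(z):=\Phi(z+b)$. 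Indeed $\Im(z+b)=\Im z$ and $\|z+b\|\ge\|z\|-\|b\|$, so (i) survives the real translation (with a harmless adjustment of the constant for bounded $z$); for $x\in\R^d$ the inequality $\|x+b\|+\|b\|+1\ge\|x\|+1$ gives $|f_b(x)|=|\Phi(x+b)|\le Ce^{-\psi_b(\|x+b\|)}=Ce^{-\psi(\|x+b\|+\|b\|+1)}\le Ce^{-\psi(\|x\|)}$, which is (ii); and (iii) is immediate, since $f_b(\sigma(z+b)-b)=\Phi(\sigma(z+b))=\Phi(z+b)=f_b(z)$ by the rotation-invariance of $\Phi$. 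Thus the $b$-dependence is only a cosmetic translation.

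For the building blocks, for $d\ge2$ set $\mathcal J_a(z)=|S^{d-1}|^{-1}\int_{S^{d-1}}e^{ia\langle z,\omega\rangle}\,d\sigma(\omega)$ (with the bilinear pairing $\langle z,\omega\rangle=\sum_j z_j\omega_j$), and $\mathcal J_a(z)=(\sin az)/(az)$ when $d=1$. Each $\mathcal J_a$ is entire on $\C^d$, $O(d)$-invariant, equals $1$ at the origin, satisfies $|\mathcal J_a(z)|\le e^{a\|\Im z\|}$ on $\C^d$, and on $\R^d$ obeys $|\mathcal J_a(x)|\le\min\{1,\,C_d(a\|x\|)^{-\alpha}\}$ with $\alpha=(d-1)/2$ for $d\ge2$ (from the asymptotics of $J_{(d-2)/2}$) and $\alpha=1$, $C_d=1$ for $d=1$. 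Given positive reals $a_j$ with $\sum_j a_j<A$, the estimate $|\mathcal J_a(z)-1|\le a\|z\|e^{a\|z\|}$ shows that $\sum_j|\mathcal J_{a_j}(z)-1|$ is bounded on compacta by a constant times $\sum_j a_j$, so $\Phi(z):=\prod_{j\ge1}\mathcal J_{a_j}(z)$ converges locally uniformly to an entire function with $\Phi(0)=1$ (in particular $\Phi\not\equiv0$, hence $f_b(-b)=\Phi(0)=1\ne0$); moreover $\Phi$ is $O(d)$-invariant and $|\Phi(z)|\le e^{(\sum_j a_j)\|\Im z\|}\le e^{A\|\Im z\|}$.

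It remains to choose the $a_j$. Writing $b_j:=c_d/a_j$ with $c_d:=C_d^{1/\alpha}$, the pointwise bound on $\R^d$ yields $\log|\Phi(x)|\le-\alpha\sum_{j:\,b_j\le\|x\|}\log(\|x\|/b_j)=-\alpha\int_1^{\|x\|}N(t)\,t^{-1}\,dt$, where $N(t)=\#\{j:b_j\le t\}$, while $\sum_j a_j=c_d\int_1^\infty N(t)\,t^{-2}\,dt$. So one fixes $T_0$ large and takes $N(t)=0$ for $t<T_0$ and $N(t)=\lceil(2/\alpha)\psi_b(2t)\rceil$ for $t\ge T_0$ (this determines the $b_j\ge T_0$): then $\alpha\int_1^r N(t)\,t^{-1}\,dt\ge2\int_{T_0}^r\psi_b(2t)\,t^{-1}\,dt\ge2(\log2)\psi_b(r)$ for $r\ge2T_0$, which dominates $\psi_b(r)-\log C$ with $C:=e^{\psi_b(2T_0)}$ (and $\psi_b(r)\le\log C$ trivially for $r<2T_0$), so $|\Phi(x)|\le Ce^{-\psi_b(\|x\|)}$; meanwhile $c_d\int_{T_0}^\infty N(t)\,t^{-2}\,dt\le c_d\big((2/\alpha)\int_{T_0}^\infty\psi_b(2t)\,t^{-2}\,dt+T_0^{-1}\big)<A$ once $T_0$ is large, since the first term is the tail of the convergent integral $\int_1^\infty\psi_b(t)\,t^{-2}\,dt\asymp\sum_n\psi_b(n)/n^2$. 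Finally, truncating the product to finitely many factors already shows that $\Phi|_{\R^d}$ decays faster than any polynomial, and $\Phi$ has exponential type $<A$, so a Phragm\'en--Lindel\"of argument (the one used in the proof of Theorem~\ref{mainlem}, via \cite{G}, Lemma~2) upgrades the real-variable estimate to property (i). The one genuinely substantive step is this calibration of $N$ against $\psi_b$ --- the place where the hypothesis $\sum_n\psi(n)/n^2<\infty$ is actually used --- together with the observation that, in dimension $d\ge2$, rotation-invariance forces the use of the Bessel kernels $\mathcal J_a$ in place of one-variable sine factors; the translation by $b$ and the convergence of the product are routine.
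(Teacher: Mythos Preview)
Your proof is correct and takes a genuinely different route from the paper's. The paper quotes the one-dimensional Paley--Wiener/Levinson construction (from \cite{K}) as a black box to obtain an even entire function $g$ on $\C$ of type $B=A/2$ with $|g(t)|\le Ce^{-\psi(2|t|)}$ on $\R$, lifts it radially to $\C^d$ via $f(z)=g(\lambda)$ with $\lambda^2=z_1^2+\cdots+z_d^2$ (entire because $g$ is even), and then averages $f(\sigma(z+b)-b)$ over rotations to enforce (iii); the final Phragm\'en--Lindel\"of step (\cite{G}, Lemma~2) upgrading the real decay to (i) is the same as yours. You instead carry out the Levinson--Beurling infinite-product construction natively in $d$ dimensions, replacing the one-variable $\sin(az)/(az)$ factors by the rotation-invariant Bessel kernels $\mathcal J_a$, and obtain (iii) by the simple translation $f_b(z)=\Phi(z+b)$ (which the paper could equally have done, since its $f$ is already $O(d)$-invariant). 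The paper's route is shorter and cleanly separates the hard one-variable analytic content, which it cites, from the easy radial lift; yours is self-contained and makes the calibration of the product against the hypothesis $\sum\psi(n)/n^2<\infty$ fully explicit, at the cost of more bookkeeping. One small point: your claim that $\Phi|_{\R^d}$ decays faster than every polynomial (needed to invoke \cite{G}, Lemma~2) presumes the product is infinite, which fails when $\psi$ is bounded; this is harmless --- replace $\psi$ by $\max\{\psi(r),\log(1+r)\}$, which is still non-decreasing with convergent series --- but it should be said.
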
 
\begin{proof}
Since the function $\psi$ is non-decreasing it follows from (\ref{seriescovgt}) that
\bes
\int_{1}^{\infty}\frac{\psi(t)}{t^2}~dt < \infty.
\ees
We now define $\psi_0(t)=\psi(2 t)$, for $t\in [0, \infty)$. The above integral is also finite if we replace $\psi$ by $\psi_0$. For $d=1$, a classical result due to Paley and Wiener (see \cite[Chapter IV, \S D, P. 101]{K}) says that for any given $B$ there exists an entire function $g$ on $\C$ such that 
\begin{enumerate}
\item[(i)] For all $\lambda\in \C$ 
\bes 
|g(\lambda)| \leq Ce^{B|\lambda|}.
\ees 
\item[(ii)] For all $t\in \R$
\bes
|g(t)|\leq Ce^{-\psi_0(|t|)}, .
\ees
\end{enumerate}
We can also assume that the function $g$ is even on $\C$. We define an entire function $f$ on $\C^d$ by
\bes
f(z)= f(z_1, \cdots, z_d)= g(\lambda),
\ees
if $\lambda^2= z_1^2 + \cdots + z_d^2$. Since $g$ is even, this is possible.
Clearly, $f$ satisfies properties (i) and (ii) above for $z\in \C^d$ and $x\in \R^d$ respectively. We now define an entire function $f_b$ by
\bes
f_b(z)= \int_{S^{d-1}} f\left(\sigma(z+b)-b\right)~d \sigma, \:\: z\in \C^d.
\ees
Then the function $f_b$ satisfies the following
\begin{enumerate}
\item[i)] For all $z\in \C^d$ 
\bes 
|f_b(z)| \leq Ce^{B\|z\|}.
\ees 
\item[ii)] For all $x\in \R^d$ with $\|x\|\geq 4\|b\|$
\bes
|f_b(x)|\leq Ce^{-\psi_0\left(\|x\|- 2\|b\|\right)}\leq Ce^{-\psi_0(\frac{\|x\|}{2})}= Ce^{-\psi(\|x\|)}.
\ees
Since the function $\psi$ is non-decreasing we can choose the constant $C$ large enough such that the above inequality holds for all $x\in \R^d$. 
\item[iii)] For all $\sigma\in S^{d}$ and $z\in \C^k$
\bes
f_b\left(\sigma(z+b)-b\right)= f_b(z).
\ees
\end{enumerate}
Now, if we choose $B= A/2$ then by (\cite{G}, Lemma 2) $f_b$ also satisfies the estimate that for each $N\in \N$ there exists a positive constant $C_N$ such that 
\bes
|f_b(z)|\leq C_N\frac{e^{A\|\Im z\|}}{1+\|z\|^N}, \:\:\:\:  \txt{ for all } z\in \C^d. 
\ees
This completes the proof.
\end{proof}

\section{Notation and Preliminaries}
In this section, we describe the necessary preliminaries regarding the harmonic analysis on Riemannian symmetric spaces of the compact type. These are standard and can be found, for example, in \cite{H0, H1, H2}. To make the article self-contained, we shall gather only those results which will be used throughout the paper. 

We are considering a Riemannian symmetric space $U/K$, where $U$ is a connected, simply connected, compact, semisimple Lie group and $K$ a closed subgroup with the property that $K=U^\theta$, for an involution $\theta$ of $U$. Here $U^\theta$ denotes the subgroup of $\theta$-fixed points. Since $U$ is simply connected, $U^\theta$ is connected. We will denote the base point in $U/K$ by $o=eK$. Let $\mathfrak u$ denote the Lie algebra of $U$ and $\mathfrak u= \mathfrak k \oplus \mathfrak q$ be the Cartan decomposition associated with the involution $\theta$. Then $\mathfrak k$ is the Lie algebra of $K$ and $\mathfrak q$ can be identified with the tangent space $T_o(U/K)$ at the origin $o$. Let $\langle \cdot, \cdot \rangle$ be the inner product on $\mathfrak u$ defined by $\langle X, Y \rangle =-B(X, Y )$, where $B$ is the Killing form. We assume that the Riemannian metric $g$ of $U/K$ is normalized such that it agrees with $\langle \cdot, \cdot\rangle$ on the tangent space $\mathfrak q = T_o(U/K)$. We denote by $\exp$ the exponential map $\mathfrak u \ra U$, and by $Exp$ the map $\mathfrak q \ra U/K$ given by $Exp(X) = \exp(X)\cdot o$. Let $\mathcal B(0, r)$ be the open ball in $\mathfrak q$ of radius $r> 0$ and centered at $0$ and $\mathcal D(o, r)$ the open metric ball in $U/K$ of radius $r > 0$ and centered at $o$. Similarly $\overline{\mathcal B(0, r)}$ and $\overline{\mathcal D(o, r)}$ stand for the closed balls. The exponential map $Exp$ is surjective and an analytic diffeomorphism $\mathcal B(0, r)\ra \mathcal D(o, r)$ for $r$ sufficiently small.

The inner product on $\mathfrak u$ determines an inner product on the dual space $\mathfrak u^*$ in a canonical fashion. Furthermore, these inner  products have complex bilinear extensions to the complexifications $\mathfrak u_\C$ and $\mathfrak u^*_\C$. All these bilinear forms are denoted by the same symbol $\langle \cdot , \cdot \rangle$.

Let $\mathfrak a \subset \mathfrak q$ be a maximal abelian subspace, $\mathfrak a^*$ its dual space, and $\mathfrak a^*_\C$ the complexified dual space. We assume that $\dim \mathfrak a= d$, called real rank of $U$. Let $\Sigma$ denote the set of non-zero (restricted) roots of $\mathfrak u$ with respect to $\mathfrak a$, then $\Sigma \subset \mathfrak a^*_\C$ and all the elements of $\Sigma$ are purely imaginary on $\mathfrak a$. The multiplicity of a root $\alpha \in \Sigma$ is denoted $m_\alpha$. The corresponding Weyl group, generated by the reflections in the roots, is denoted $W$. We make fixed choice of a positive system $\Sigma^+$ for $\Sigma$ and define $\rho \in i \mathfrak a ^*$ to be the half sum of the roots in $\Sigma^+$, counted with multiplicity. 

Since $U$ is compact there exists a unique (up to isomorphism) connected complex Lie group $U_\C$ with Lie algebra $\mathfrak u_\C$ which contains $U$ as a real Lie subgroup. Let $\mathfrak g$ denote the real form $\mathfrak k \oplus i \mathfrak q$ of $u_\C$ and let $G$ denote the connected real Lie subgroup of $U_\C$ with this Lie algebra. Then $\mathfrak g_\C= \mathfrak u_\C$ as complex vector spaces and $U_\C$ complexifies $G$ as well as $U$. For this reason we shall denote $U_\C$ also by $G_\C$. The Cartan involutions of $\mathfrak u$ and $U$ extend to involutions of $\mathfrak g_\C$ and $G_\C$, which we shall denote again by $\theta$, and which leave $\mathfrak g$ and $G$ invariant. The corresponding Cartan decomposition of $\mathfrak g$ is $\mathfrak g= \mathfrak k \oplus i\mathfrak q$. It follows that $K=G^\theta$ is maximal compact in $G$, and $G/K$ is a Riemannian symmetric space of the non-compact type. 

We denote by $\mathfrak g= \mathfrak k \oplus i\mathfrak a \oplus \mathfrak n$ and $G=KAN$ the Iwasawa decompositions of $\mathfrak g$ and $G$ associated with $\Sigma^+$. Here $A= \exp(i\mathfrak a)$ and $N=\exp \mathfrak n$. Furthermore, we let $H: G\ra i\mathfrak a$ denote the Iwasawa projection given by $
H(k \exp Y n)= Y$, for $k\in K, Y\in i\mathfrak a$ and $n\in N$. Let $K_\C, A_\C$ and $N_\C$ denote the connected subgroups of $G_\C$ with Lie algebras $\mathfrak k_\C, \mathfrak a_\C$ and $\mathfrak n_\C$. Then $G_\C/K_\C$ is a symmetric space and it carries a natural complex structure with respect to which $U/K$ and $G/K$ are totally real submanifolds of maximal dimension. We now have the following result.
\begin{lem} [\cite{OS1}, Lemma 2.1] \label{lempre}
There exists an open $K_\C \times K$-invariant neighborhood $\mathcal V^a$ of the neutral element $e$ in $G_\C$ and a holomorphic map $H: \mathcal V^a \ra \mathfrak a_\C$
which agrees with Iwasawa projection on $\mathcal V^a \cap G$, such that $u\in K_\C \exp(H(u))N_\C$, for all $u\in \mathcal V^a$.
\end{lem}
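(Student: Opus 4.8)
Although this lemma is quoted from \cite{OS1}, let me indicate the route I would take to prove it: it is the standard complexification of the Iwasawa decomposition in a neighbourhood of the identity, combined with a compactness argument to make the domain $K_\C\times K$-invariant.

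The starting point is the infinitesimal decomposition. Complexifying the Iwasawa decomposition $\mathfrak g=\mathfrak k\oplus i\mathfrak a\oplus\mathfrak n$ and using $(i\mathfrak a)_\C=\mathfrak a_\C$ gives a direct sum of complex subalgebras $\mathfrak g_\C=\mathfrak k_\C\oplus\mathfrak a_\C\oplus\mathfrak n_\C$. Hence the multiplication map $\mu:K_\C\times A_\C\times N_\C\ra G_\C$, $\mu(k,a,n)=kan$, is holomorphic, and its differential at $(e,e,e)$ is the complex linear addition map $\mathfrak k_\C\oplus\mathfrak a_\C\oplus\mathfrak n_\C\ra\mathfrak g_\C$, which is an isomorphism. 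By the holomorphic inverse function theorem there are open neighbourhoods $\Omega_K,\Omega_A,\Omega_N$ of $e$ in $K_\C,A_\C,N_\C$ such that $\mu$ restricts to a biholomorphism of $\Omega_K\times\Omega_A\times\Omega_N$ onto an open neighbourhood $\mathcal W$ of $e$ in $G_\C$; write its inverse as $u\mapsto(k(u),a(u),n(u))$. Shrinking $\Omega_A$ so that $\exp$ is a biholomorphism of a neighbourhood of $0$ in $\mathfrak a_\C$ onto $\Omega_A$, I would set
\bes
H(u)=\exp^{-1}\big(a(u)\big)\in\mathfrak a_\C,\qquad u\in\mathcal W.
\ees
This $H$ is holomorphic on $\mathcal W$ and satisfies $u\in K_\C\exp(H(u))N_\C$ by construction. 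Since $\mu^{-1}$ maps $\mathcal W\cap G$ into $(\Omega_K\cap K)\times(\Omega_A\cap A)\times(\Omega_N\cap N)$, the factorization of a point of $G$ is its genuine Iwasawa decomposition by uniqueness, so $H$ coincides with the Iwasawa projection on $\mathcal W\cap G$.

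It then remains to enlarge $\mathcal W$ to a $K_\C\times K$-invariant open set while keeping $H$ holomorphic, single-valued, and with the same two properties. Shrinking $\Omega_A,\Omega_N$ once more I may assume the $A_\C$-component is single-valued on $K_\C\mathcal W=K_\C\,\Omega_A\Omega_N$: here one uses that $K_\C\cap A_\C N_\C$ is a closed subgroup with trivial Lie algebra $\mathfrak k_\C\cap(\mathfrak a_\C\oplus\mathfrak n_\C)=0$, hence discrete, so that for $\Omega_A,\Omega_N$ small the only element of $K_\C\cap A_\C N_\C$ whose $A_\C$-part lies in $\Omega_A\Omega_A^{-1}$ is $e$; a short computation then shows that two factorizations $k_1a_1n_1=k_2a_2n_2$ with $a_i\in\Omega_A$, $n_i\in\Omega_N$ force $a_1=a_2$. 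Consequently $H$ extends holomorphically and left-$K_\C$-invariantly to the open set $\mathcal O:=K_\C\mathcal W\supseteq K_\C$, still with $u\in K_\C\exp(H(u))N_\C$, and still equal to the Iwasawa projection on $\mathcal O\cap G$. Finally, since $K$ is compact and $\mathcal O$ is open and contains $K$, a tube-lemma argument gives a neighbourhood of $e$ inside $\bigcap_{k\in K}\mathcal O k^{-1}$; I would take $\mathcal V^a$ to be the interior of $\bigcap_{k\in K}\mathcal O k^{-1}$, which is an open $K_\C\times K$-invariant neighbourhood of $e$ (right-$K$-invariance is built in, left-$K_\C$-invariance follows from $k_1\mathcal O=\mathcal O$), and restrict $H$ to it.

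The main obstacle is precisely this last enlargement, that is, producing a $K_\C\times K$-invariant domain on which the $A_\C$-component of the complexified Iwasawa factorization stays well defined; the discreteness of $K_\C\cap A_\C N_\C$ and the compactness of $K$ are exactly what make it go through, while the existence, holomorphy and boundary identification of $H$ are routine consequences of the holomorphic inverse function theorem and uniqueness of the factorization.
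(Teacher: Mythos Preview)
The paper does not give its own proof of this lemma; it is simply quoted from \cite{OS1}. So there is no in-paper argument to compare against. Your sketch is the standard route to this result and is essentially correct: the holomorphic inverse function theorem applied to the multiplication map $K_\C\times A_\C\times N_\C\to G_\C$ at the identity produces the local complexified Iwasawa factorization and hence the holomorphic $H$, the agreement on $G$ follows from uniqueness of the real Iwasawa decomposition, and the invariance of the domain is obtained from left $K_\C$-invariance (using discreteness of $K_\C\cap A_\C N_\C$) together with a compactness argument in $K$ for right $K$-invariance. This is also how \'Olafsson and Schlichtkrull set it up in \cite{OS1}.

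One small point worth tightening: when you pass from $\mathcal W$ to $K_\C\mathcal W$ you assert that the $A_\C$-component is single-valued after shrinking $\Omega_A,\Omega_N$. The argument you give (any element of the discrete group $K_\C\cap A_\C N_\C$ with $A_\C$-part in $\Omega_A\Omega_A^{-1}$ must be $e$) is the right idea, but note that $K_\C\cap A_\C N_\C$ lying in $A_\C N_\C$ does not a priori mean its ``$A_\C$-part'' is well defined globally; what you really use is that $K_\C\cap A_\C N_\C$ is discrete and that the map $(a,n)\mapsto an$ from $A_\C\times N_\C$ to $A_\C N_\C$ is a local diffeomorphism near $e$, so near the identity the only solution of $an\in K_\C$ with $a\in\Omega_A\Omega_A^{-1}$, $n\in\Omega_N\Omega_N^{-1}$ is $a=n=e$. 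With that clarification the extension of $H$ to $K_\C\mathcal W$ is unambiguous.
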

We call the map $H$ the complexified Iwasawa projection. We shall now recall the local Fourier theory for $U/K$ based on elementary representation theory. An irreducible unitary representation $\pi$ of $U$ is said to be a $K$-spherical representation if there exists a non-zero $K$-fixed vector $e_\pi$ in the representation space $V_\pi$. The vector $e_\pi$ (if it exists) is unique up to multiplication by scalars. A spherical representation $\pi=\pi_\mu$ is labeled by an element $\mu \in \mathfrak a_\C ^*$, which is the restriction, from a compatible maximal torus, of the highest weight of $\pi$ (see \cite{H2}, p. 538). We denote by $\Lambda^+(U/K)\subset \mathfrak a_\C^\ast$ the set of these restricted highest weights, so that $\mu \mapsto \pi_\mu$ sets up a bijection from $\Lambda^+(U/K)$ onto the set of equivalence classes of irreducible $K$-spherical representations. According to Helgason's theorem, every $\mu \in \Lambda^+(U/K)$ satisfies
\bes
\frac{\langle \mu, \alpha \rangle}{\langle  \alpha, \alpha \rangle} \in \mathbb Z^+, \:\:\:\: \txt{ for all } \alpha \in \Sigma^+,
\ees
\cite[Ch IV, Theorem 4.1]{H2}. 
A root $\alpha\in \Sigma$ is said to be unmultipliable if $2\alpha \notin \Sigma$.  
Let $\{\beta_1,\cdots, \beta_d\}$ be a basis  consisting of simple unmultipliable roots. We now define $w_1, \cdots, w_d \in \mathfrak a_\C^\ast$
by the conditions
\be \label{wdefn}
\frac{\langle w_j, \beta_k\rangle}{\langle \beta_k, \beta_k \rangle}= \delta_{jk}.
\ee
Let $\mu\in \mathfrak a^\ast$. Then $\mu \in \Lambda^+(U/K)$ if any only if
\bes
\mu= \sum_{j=1}^d \mu_j w_j, \:\:\:\: \txt{ with } \mu_j\in \mathbb Z^+, j\in\{1, \cdots, d\},
\ees
\cite[Ch II, Proposition 4.23]{H1}. The weights $w_1, \cdots, w_d$ are called fundamental weights. Let $\Lambda(U/K)=\sum_{j=1}^d \mathbb Z w_j$. We have then
\be \label{Lambdareln}
\Lambda^+(U/K)= \Lambda(U/K)/W,
\ee
\cite[Ch V]{H2}. We also need the following fact that there exists two positive numbers $c_1$ and $c_2$ such that 
\be \label{normeqv}
c_1\|n\|\leq \|(n_1w_1 +\cdots +n_dw_d)\|\leq c_2\|n\|,
\ee
for all $n=(n_1, \cdots, n_d)\in \mathbb Z^d$.
For each $\mu\in \Lambda^+(U/K)$ we fix an irreducible unitary spherical representation $(\pi_\mu, V_\mu )$ of $U$ and a unit $K$-fixed vector $e_\mu \in V_\mu$. The spherical function on $U/K$ associated with $\mu$ is the matrix coefficient
\be \label{psidefn}
\psi_\mu(u) = \langle \pi_\mu(u)e_\mu, e_\mu \rangle, \:\: u\in U,
\ee
viewed as a function on $U/K$. It is $K$-biinvariant, that is, $K$-invariants on both sides as a function on $U$, and it is independent of the choice of the unit vector $e_\mu$. Henceforth, we shall denote the set of $K$-biinvariant functions in $L^1(G/K)$ by $L^1(G//K)$. The spherical Fourier transform of a continuous $K$-biinvariant function $f$ on $U$ is the function $\widehat f$ on $\Lambda^+(U/K)$ defined by
\be \label{defnsft}
\widehat f(\mu) = \int_{U/K} f(u) ~ \overline{\psi_\mu(u)} ~ du,
\ee
where $du$ is the Riemannian measure on $U/K$, normalized with total measure $1$. The spherical Fourier series for $f$ is the series given by
\bes
\sum_{\mu \in \Lambda^+(U/K)}  d(\mu) ~ \widehat f(\mu) ~ \psi_\mu,
\ees
where $d(\mu)= \dim V_\mu$. The Fourier series converges to $f$ in $L^2$ and, if $f$ is smooth, absolutely and uniformly (see \cite{H2}, P. 538). 

We shall now recall the local Paley-Wiener theorem for $K$-biinvariant functions on compact symmetric space due to \'Olafsson and Schlichtkrull \cite{OS}. Let $C_r^\infty(U//K)$ denote the space of $K$-biinvariant smooth functions on $U$ supported in $Exp ~\overline{\mathcal B(0, r)}$. For $r > 0$, let $PW_r(\mathfrak a)$ denote the space of holomorphic
functions $\phi$ on $\mathfrak{a}_{\C}^*$ satisfying the following:
\begin{enumerate}
\item[a)] For each $N\in \N$ there exists a constant $C_N > 0$ such that
\bes
|\phi(\lambda)| \leq C_N(1 +\|\lambda\|)^{-N} e^{r\|\Re\lambda\|},\:\:\:\: \txt{ for all } \lambda\in\mathfrak{a}_{\C}^*.
\ees
\item[b)] For all $w \in W, \lambda \in\mathfrak{a}_{\C}^*$,
\bes
\phi(w(\lambda + \rho)-\rho) = \phi(\lambda), 
\ees 
where 
\bes
\rho = \frac{1}{2} \Sigma_{\alpha \in \Sigma^+ }  ~ m_\alpha \alpha.
\ees
\end{enumerate}

\begin{thm} [\cite{OS}, Theorem 4.2] \label{pwthm} 
There exists $R_0 > 0$ such that the following holds for each $0 < r < R_0$:
\begin{enumerate}
\item[i)] Let $f\in C_r^\infty(U//K)$. Then the spherical Fourier transform $\widehat f : \Lambda^+(U/K)\ra \C$ of $f$ extends to a function in $PW_r(\mathfrak a)$.
\item[ii)]Let $\phi \in PW_r(\mathfrak a)$. There exists a unique function $f \in C_r^\infty(U//K)$ such that $\widehat f(\mu) = \phi(\mu)$, for all $\mu \in \Lambda^+(U/K)$.
\end{enumerate}
\end{thm}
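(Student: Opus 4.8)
The plan is to reduce the statement, by analytic continuation, to the Paley--Wiener theorem on the noncompact dual $G/K$; this is the route taken by \'Olafsson and Schlichtkrull. Write $Exp_{U/K}$ for the map $Exp$ of the previous section and $Exp_{G/K}$ for the analogous map of $G/K$. The first task is to fix $R_0>0$ small enough that, for every $0<r<R_0$: (1) $Exp_{U/K}$ is an analytic diffeomorphism of $\mathcal B(0,r)$ onto $\mathcal D(o,r)$, and so is $Exp_{G/K}$ on the corresponding ball in $i\mathfrak q$, the linear map $X\mapsto iX$ being an isometry of $\mathfrak q$ (with the $U/K$-metric) onto $i\mathfrak q$ (with the $G/K$-metric); (2) on $Exp_{U/K}(\overline{\mathcal B(0,r)})$ the spherical function $\psi_\mu$ of $U/K$ equals, after these identifications, the analytic continuation of Harish-Chandra's spherical function $\varphi_\lambda$ of $G/K$ taken at $\lambda=-i(\mu+\rho)$ --- equivalently, both are restrictions of one Heckman--Opdam hypergeometric function attached to $(\mathfrak a,\Sigma)$, and the holomorphic dependence of $\mu\mapsto\psi_\mu(Exp_{U/K}X)$ together with an exponential bound can alternatively be extracted from the Harish-Chandra type integral formula expressing $\psi_\mu$, for $u$ near $e$, as a $K$-average of an exponential of the complexified Iwasawa projection of Lemma~\ref{lempre}; (3) the transfer below is faithful enough to yield exponential type exactly $r$. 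Each requirement forces $R_0$ to be small; (2) and the sharpness in (3) are the essential constraints, (1) being only the injectivity-radius restriction familiar from the noncompact case.

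For part~(i), let $f\in C_r^\infty(U//K)$. Transport $f$ via $Exp_{U/K}^{-1}$, the isometry $X\mapsto iX$, and $Exp_{G/K}$ to a $K$-biinvariant $g\in C_c^\infty(G//K)$ supported in $\overline{\mathcal D(o,r)}\subset G/K$. By the identification in (2), there is a fixed $c>0$ with $\widehat f(\mu)=c\,\widehat g\big(-i(\mu+\rho)\big)$ for all $\mu\in\Lambda^+(U/K)$, where $\widehat g$ is the spherical transform of $g$ on $G/K$. By the Helgason--Gangolli Paley--Wiener theorem for $K$-biinvariant functions on $G/K$, $\widehat g$ extends to a $W$-invariant holomorphic function on $\mathfrak a_\C^*$ with $|\widehat g(\nu)|\le C_N(1+\|\nu\|)^{-N}e^{r\|\Im\nu\|}$ for every $N$. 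Hence $\lambda\mapsto c\,\widehat g\big(-i(\lambda+\rho)\big)$ is a holomorphic extension of $\widehat f$ to $\mathfrak a_\C^*$; since $\Im\big(-i(\lambda+\rho)\big)=-\Re\lambda$, this extension satisfies estimate (a) of $PW_r(\mathfrak a)$, and the $W$-invariance of $\widehat g$ translates, under the affine substitution, into the covariance $\widehat f\big(w(\lambda+\rho)-\rho\big)=\widehat f(\lambda)$, which is (b). Thus $\widehat f\in PW_r(\mathfrak a)$. (Rapid decay of $\widehat f$ along $\Lambda^+(U/K)$ itself, needed even to talk about the Fourier series, is immediate from smoothness of $f$ and $|\psi_\mu|\le1$ on $U$, by applying powers of the Casimir operator, of which $\psi_\mu$ is an eigenfunction.)

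For part~(ii), let $\phi\in PW_r(\mathfrak a)$. By (b) the function $h(\nu):=\phi\big(i\nu-\rho\big)$ on $\mathfrak a_\C^*$ is $W$-invariant, and by (a) it is entire, of exponential type $r$ in $\|\Im\nu\|$, and rapidly decreasing; so the surjectivity part of the Helgason--Gangolli theorem supplies a unique $g\in C_c^\infty(G//K)$ supported in $\overline{\mathcal D(o,r)}$ with $\widehat g=c^{-1}h$. Transporting $g$ back to $U/K$ as above produces $f\in C^\infty(U//K)$ supported in $Exp_{U/K}(\overline{\mathcal B(0,r)})$, hence $f\in C_r^\infty(U//K)$, and the identity in (2) gives $\widehat f(\mu)=\phi(\mu)$ for all $\mu\in\Lambda^+(U/K)$. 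Uniqueness of $f$ follows from injectivity of the spherical transform on $C^\infty(U//K)$: the Fourier series $\sum_{\mu}d(\mu)\widehat f(\mu)\psi_\mu$ converges to $f$, so $\widehat f\equiv0$ forces $f=0$. (One could instead simply set $f:=\sum_{\mu\in\Lambda^+(U/K)}d(\mu)\phi(\mu)\psi_\mu$ --- the series converges to a smooth $K$-biinvariant function since $\phi$ decays rapidly while $d(\mu)$ and $\psi_\mu$ grow only polynomially, and $\widehat f=\phi$ on $\Lambda^+(U/K)$ by the orthogonality relations $\int_{U/K}\psi_\nu\overline{\psi_\mu}\,du=\delta_{\mu\nu}/d(\mu)$; but then the support statement has to be recovered through the noncompact side regardless.)

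The step I expect to be the main obstacle is item (2): the analytic-continuation identification and, with it, the determination of $R_0$. One must control how large a ball $Exp$ remains a diffeomorphism on for both $U/K$ and $G/K$, and how large a domain in the space variable the relevant trigonometric/hyperbolic hypergeometric function of Heckman--Opdam is holomorphic on and simultaneously represents $\psi_\mu$ and $\varphi_\lambda$; this is where genuine input from the theory of hypergeometric functions associated with root systems --- or, in classical terms, from Flensted-Jensen's duality --- is needed, and it is the origin of the a priori bound on $r$. The residual difficulty is quantitative: making sure the exponential type emerging from the transfer is precisely $r$, not merely $r+\varepsilon$. This reduces to the sharp comparison, on the common small ball, of the geodesic distance with the Euclidean norm on $\mathfrak q$ that enters the estimate of the (complexified) Iwasawa projection, and it may cost a further, harmless, shrinking of $R_0$.
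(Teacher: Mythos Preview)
This theorem is not proved in the paper you are reading: it is quoted verbatim from \'Olafsson--Schlichtkrull \cite{OS} (their Theorem~4.2) and used as a black box in the proof of Theorem~\ref{thmsym}. There is therefore no ``paper's own proof'' to compare your proposal against. Your sketch is a reasonable outline of the strategy actually employed in \cite{OS} --- duality with the noncompact dual $G/K$ via holomorphic extension, reduction to the Helgason--Gangolli Paley--Wiener theorem, and careful control of the radius $R_0$ through the complexified Iwasawa projection --- but for the purposes of the present paper no proof is expected or given.

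If your intent was to supply an independent proof, be aware that the honest difficulties you flag in your last paragraph (the analytic continuation identifying $\psi_\mu$ with $\varphi_{-i(\mu+\rho)}$ on a common ball, and the sharpness of the exponential type) are precisely the substance of \cite{OS}; they are not routine and cannot be dispatched in a paragraph. In particular, the ``transport'' of $f$ to $g$ on $G/K$ requires more than the naive map $Exp_{G/K}\circ(X\mapsto iX)\circ Exp_{U/K}^{-1}$: one must also match the Jacobians of the two exponential maps so that the spherical transforms correspond, and this is where the Heckman--Opdam (or Flensted-Jensen) machinery genuinely enters. Your proposal acknowledges this but does not carry it out, so as a self-contained proof it is incomplete; as a pointer to the literature it is accurate.
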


We now define the the Fourier transform of an integrable function $f$ on $U/K$ which is not necessarily $K$-biinvariant. The theory essentially originates from Sherman \cite{S}. For each $\mu\in \Lambda^+(U/K)$, we fix an irreducible unitary spherical representation $(\pi_\mu, V_\mu)$ of $U$ and a unit $K$-fixed vector $e_\mu\in V_\mu$. Furthermore, we fix a highest weight vector $v_\mu$ of weight $\mu$, such that $\langle v_\mu, e_\mu \rangle = 1$.
We now define the Fourier transform of an integrable function $f$ on $U/K$ by
\bes
\tilde f(\mu, k)= \int_{U/K} f(u) \langle \pi_\mu(k)v_\mu, \pi_\mu(u)e_\mu\rangle du,
\ees
(see \cite{OS1}). If $f$ is $K$-invariant, then $\tilde f(\mu, k)$ is independent of $k$. Integration over $K$ then shows that this definition agrees with the spherical Fourier transform in (\ref{defnsft}).

We now invoke the complex group $G_\C$ and the complexified Iwasawa projection
defined Lemma \ref{lempre}. Let $\mathcal V^a \subset G_\C$ and $H : \mathcal V^a\ra \mathfrak a_\C$ be as in Lemma \ref{lempre}, and let $\mu \in \Lambda^+(U/K)$. Since $\pi_\mu$ extends to a holomorphic representation of $G_\C$ and $\langle v_\mu, e_\mu \rangle= 1$, it follows from Lemma \ref{lempre} that 
\bes
\langle \pi_\mu(u)v_\mu, e_\mu \rangle = e^{\mu\left(H(u)\right)}, \:\:\:\: \txt{ for all } u\in \mathcal V^a. 
\ees 
We define 
\bes
\mathcal V= \{u^{-1}: u\in \mathcal V^a\} \subset G_\C.
\ees
Then it follows from the above equality that
\be \label{ftexprn}
\langle \pi_\mu(k)v_\mu, \pi_\mu(u)e_\mu \rangle =e^{\mu\left(H(u^{-1}k)\right)},
\ee 
for $k\in K, u\in U\cap \mathcal V$ and $\mu\in \Lambda^+(U/K)$. On the other hand it  follows from \cite[Theorem 7.1]{OS1} that there exists a number $R_1 > 0$ such that $Exp~ (\overline{\mathcal B(0, R_1)}) \subset U \cap \mathcal V$. Therefore we  have the following
 \begin{lem}[\cite{OS1}, Lemma 3.2]\label{lempre2}
Let $f$ be an integrable function on $U/K$ and supported in $Exp~ (\overline{\mathcal B(0, R_1)})$. Then
\bes
\widetilde f(\mu, k) = \int_{U/K}f(u) e^{\mu\left(H (u^{-1} k) \right)} du,
\ees
for all $k\in K/M$ and $\mu \in \Lambda^+(U/K)$.
\end{lem}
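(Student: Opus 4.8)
The plan is to obtain the identity by substituting the closed-form evaluation (\ref{ftexprn}) of the matrix coefficient directly into the defining integral of $\widetilde f$. The one ingredient that makes the support hypothesis on $f$ do its job is the containment $Exp(\overline{\mathcal B(0, R_1)}) \subset U \cap \mathcal V$ furnished by \cite[Theorem 7.1]{OS1}. Granting this, if $u$ lies in the support of $f$ then $u \in U \cap \mathcal V$, which is exactly the range of $u$ for which (\ref{ftexprn}) was asserted; hence, for every $k \in K$,
\bes
\langle \pi_\mu(k)v_\mu, \pi_\mu(u)e_\mu\rangle = e^{\mu(H(u^{-1}k))}
\ees
holds at every point of the support of $f$. (If one wishes to see this from scratch: $u \in U\cap\mathcal V$ gives $u^{-1}\in\mathcal V^a$ by the definition of $\mathcal V$, and then $u^{-1}k\in\mathcal V^a$ for all $k\in K$ by the right $K$-invariance contained in the $K_\C\times K$-invariance of $\mathcal V^a$ in Lemma \ref{lempre}, so the complexified Iwasawa projection $H(u^{-1}k)$ is defined.)

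Inserting this into the definition of the Fourier transform gives
\bes
\widetilde f(\mu, k) = \int_{U/K} f(u)\,\langle \pi_\mu(k)v_\mu, \pi_\mu(u)e_\mu\rangle\, du = \int_{U/K} f(u)\, e^{\mu(H(u^{-1}k))}\, du ,
\ees
which is precisely the claimed formula; the right-hand integrand is defined at every point where $f(u)\neq 0$, which is all that the integral requires. It then remains only to observe that both sides of this equality are right $M$-invariant in $k$ — a fact already built into Sherman's setup for the Fourier transform used in \cite{OS1}, reflecting $\pi_\mu(m)v_\mu = v_\mu$ for $m$ in the centralizer $M$ of $\mathfrak a$ in $K$ — so that both genuinely define functions on $K/M$, as in the statement.

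I do not expect a real obstacle here: once (\ref{ftexprn}) is granted, Lemma \ref{lempre2} is essentially a one-line substitution, and its only nontrivial content is the localization $Exp(\overline{\mathcal B(0, R_1)}) \subset U \cap \mathcal V$ from \cite{OS1} together with the $K_\C \times K$-invariance of $\mathcal V^a$, which jointly guarantee that $H$ is defined at $u^{-1}k$ for all $u$ in the support of $f$ and all $k \in K$ simultaneously. Verifying the $M$-invariance needed to pass to $K/M$ is equally routine.
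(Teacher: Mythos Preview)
Your proposal is correct and matches the paper's own treatment: the paper does not give a separate proof of Lemma~\ref{lempre2} but simply records equation~(\ref{ftexprn}), notes the containment $Exp(\overline{\mathcal B(0,R_1)})\subset U\cap\mathcal V$ from \cite[Theorem~7.1]{OS1}, and then writes ``Therefore we have the following'' before stating the lemma. Your argument is exactly the substitution that this ``Therefore'' encodes, together with the routine observation on $M$-invariance.
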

Since all norms are quivalent on $\mathfrak a_\C^\ast$, we get $c^\ast> 1$ such that 
\bes
\left(|\lambda_1|+\cdots+ |\lambda_d|\right) \leq c^\ast \|\lambda\|, \txt{ for all } \lambda=\left(\sum_{j=1}^d\lambda_j w_j\right)\in \mathfrak a_\C^\ast.
\ees
\begin{rem}\label{Rdefn}
Let us now define $R= \frac{\min\{R_0, R_1\}}{c_\ast^2}$. Then $R$ satisfies the following (\cite[Remark 4.3]{OS})
\begin{enumerate}
\item[(i)] $Exp(\overline{\mathcal B(0, r)})$ is diffeomorphism onto its image for all $0< r< R$.
\item[(ii)] $c_\ast^2 R\leq \frac{\pi}{2\|\alpha\|}$, for all $\alpha \in \Sigma$.
\item[(iii)] $c_\ast^2 R\leq \frac{\pi}{\|w_j\|}$, for all fundamental weights $w_j$.
\end{enumerate}
\end{rem}


\section{Proof of Theorem \ref{thmsym}} 
We now complete the proof of the main theorem. For $f\in L^1(U/K)$ we first define the $K$-biinvariant component $\mathcal Sf$ of $f$ by the integral
\bes
\mathcal Sf(u) = \int_Kf(ku)~dk, \:\: u\in U/K,
\ees
and for $u \in U$ we define the left translation operator $l_u$ on $L^1(U/K)$ by 
\bes
l_u f(u')= f(uu'), \:\:  u'\in U/K.
\ees
\begin{rem}
Usually one defines the operator $l_u$ as left translation by $u^{-1}$. The reason we have defined $l_u$ as left translation by $u\in G$ because then it follows that ${\mathcal S}(l_uf)={\mathcal S}(l_{u_1}f)$ if $uK=u_1K$.
\end{rem}
We now state the relationships between the Fourier transform of a suitable function $f$ and $l_uf$, for some $u\in U$. Let us fix a positive number $r$ with $r< R$, where $R$ is defined as in Remark \ref{Rdefn}. Suppose $f\in L^1(U/K)$ is such that $\txt{supp } f \subset Exp(\overline{\mathcal B(0, r)})$. Let us consider $\epsilon= (R-r)/2$ and $u_0\in Exp(\overline{\mathcal B(0, \epsilon)})$. It is easy to see that $\txt{supp }l_{u_0}f \subset Exp(\overline{\mathcal B(0, R)})$. By Lemma \ref{lempre2} we get that
\bea \label{ftintreln}
(l_{u_0}f{\widetilde{)}}(\mu, k) &=& \int_{U/K}(l_{u_0}f)(u) e^{\mu\left(H (u^{-1} k) \right)} du \nonumber\\
&=& \int_{U/K}f(u) e^{\mu\left(H (u^{-1} u_0 k) \right)} du,
\eea
We now use the following fact
\be \label{Hreln}
H(u^{-1} u_0k)= H(u^{-1}\kappa(u_0k))+ H(u_0k), \:\:\:\: u\in Exp(\overline{\mathcal B(0, r)}), k\in K, 
\ee
where $\kappa(k \exp(Y)n)= k$ (see \cite{OS}). This follows from the fact that 
$A_\C$ normalizes $N_\C$. From (\ref{ftintreln}) and (\ref{Hreln}) we get that
\be\label{trnsreln}
(l_{u_0}f{\widetilde{)}}(\mu, k) = e^{\mu \left(H(u_0k)\right)} \widetilde f(\mu, \kappa(u_0 k)),
\ee
(see \cite{H1}, Chapter III, \S 2, P. 209 for noncompact case). 
For a nonzero integrable function $f$, its $K$-biinvariant component $\mathcal S(f)$ may be zero. However, the following lemma shows that there always exists $u\in U$ such that $\mathcal S(l_uf)$ is nonzero (see \cite[Lemma 4.6]{BR} for noncompact case). 
\begin{lem} \label{nonzeroradiallem}
If $f\in L^1(U/K)$ is nonzero then for every $\epsilon$ positive there exists $u\in U$ with $uK\in Exp(\mathcal B(0, \epsilon))$ such that $\mathcal S(l_uf)$ is nonzero.
\end{lem}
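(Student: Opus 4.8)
\emph{Proof proposal.} The plan is to argue by contradiction, converting the hypothesis into information about the spherical Fourier transform of $\mathcal S(l_uf)$ and then exploiting the real-analyticity of the matrix coefficients of the spherical representations. The first ingredient I would establish is the formula
\bes
\widehat{\mathcal S(l_uf)}(\mu)=\int_{U/K}f(w)\,\overline{\psi_\mu(u^{-1}w)}\,dw=\int_{U/K}f(w)\,\overline{\langle\pi_\mu(w)e_\mu,\pi_\mu(u)e_\mu\rangle}\,dw,
\ees
valid for every $\mu\in\Lambda^+(U/K)$ and every $u\in U$. This is obtained by expanding $\widehat{\mathcal S(l_uf)}(\mu)=\int_{U/K}\int_K f(uku')\,\overline{\psi_\mu(u')}\,dk\,du'$ and using the $U$-invariance of the normalized measure on $U/K$ together with the $K$-biinvariance of $\psi_\mu$ to carry out the substitutions $u'\mapsto k^{-1}u'$ and $u'\mapsto u^{-1}u'$; the last equality uses $\psi_\mu(u^{-1}w)=\langle\pi_\mu(u^{-1}w)e_\mu,e_\mu\rangle=\langle\pi_\mu(w)e_\mu,\pi_\mu(u)e_\mu\rangle$. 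Introducing the vector $\eta_\mu:=\int_{U/K}\overline{f(w)}\,\pi_\mu(w)e_\mu\,dw\in V_\mu$, this reads simply $\widehat{\mathcal S(l_uf)}(\mu)=\langle\pi_\mu(u)e_\mu,\eta_\mu\rangle$. I also note that $l_uf$ descends to $U/K$ and $\mathcal S$ produces a $K$-biinvariant function, so $\mathcal S(l_uf)\in L^1(U//K)$, where the spherical Fourier transform is injective.

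Next, suppose the conclusion fails, so that $\mathcal S(l_uf)=0$ for every $u$ in the set $\mathcal N:=\{u\in U:uK\in Exp(\mathcal B(0,\epsilon))\}$; this $\mathcal N$ contains an open neighbourhood of $e$ in $U$, since $Exp$ maps a small ball $\mathcal B(0,\delta)$ diffeomorphically onto the open metric ball $\mathcal D(o,\delta)\subseteq Exp(\mathcal B(0,\epsilon))$. By injectivity of the spherical Fourier transform and the formula above, $\langle\pi_\mu(u)e_\mu,\eta_\mu\rangle=0$ for all $\mu\in\Lambda^+(U/K)$ and all $u\in\mathcal N$. For fixed $\mu$, the map $u\mapsto\langle\pi_\mu(u)e_\mu,\eta_\mu\rangle$ is a matrix coefficient of the finite-dimensional representation $\pi_\mu$ of the Lie group $U$, hence real-analytic on $U$; vanishing on the nonempty open set $\mathcal N$, and $U$ being connected, it vanishes identically on $U$. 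Therefore $\eta_\mu$ is orthogonal to $\mathrm{span}\{\pi_\mu(u)e_\mu:u\in U\}$, which equals $V_\mu$ by irreducibility of $\pi_\mu$, so $\eta_\mu=0$ for every $\mu$.

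Finally, $\eta_\mu=0$ for all $\mu$ forces $f=0$: since $\eta_\mu=0$ means $\int_{U/K}\overline{f(w)}\,\langle\pi_\mu(w)e_\mu,\xi\rangle\,dw=0$ for every $\xi\in V_\mu$, taking $\xi=\pi_\mu(k)v_\mu$ gives $\widetilde f(\mu,k)=0$ for all $\mu\in\Lambda^+(U/K)$ and $k\in K$, whence $f=0$ by injectivity of the Fourier transform on $L^1(U/K)$ — equivalently, by the Peter--Weyl theorem the functions $w\mapsto\langle\pi_\mu(w)e_\mu,\xi\rangle$, $\mu\in\Lambda^+(U/K)$, $\xi\in V_\mu$, span a uniformly dense subspace of $C(U/K)$. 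This contradicts $f\ne 0$ and proves the lemma. The one genuinely delicate step is the propagation in the second paragraph: passing from ``$\mathcal S(l_uf)=0$ near $e$'' to ``$\mathcal S(l_uf)=0$ on all of $U$'' rests entirely on the real-analyticity of the matrix coefficients of $\pi_\mu$ and the connectedness of $U$ — the same rigidity that underlies Carlson-type statements; everything else is routine bookkeeping with the invariance of the Riemannian measure and the biinvariance of the spherical functions.
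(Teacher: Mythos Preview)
Your proof is correct, but it follows a different route from the paper's. The paper convolves with the heat kernel: from $\mathcal S(l_uf)=0$ for $uK\in Exp(\mathcal B(0,\epsilon))$ one computes $\int_U\mathcal S(l_uf)(v)\,h_t(v^{-1})\,dv=(f*h_t)(uK)$, so $f*h_t$ vanishes on the open ball $Exp(\mathcal B(0,\epsilon))$ for every $t>0$; since $f*h_t$ is real-analytic on $U/K$ it vanishes identically, and letting $t\to 0$ gives $f=0$. Your argument instead works representation-by-representation: the identity $\widehat{\mathcal S(l_uf)}(\mu)=\langle\pi_\mu(u)e_\mu,\eta_\mu\rangle$ together with real-analyticity of the matrix coefficients yields $\eta_\mu=0$ for every $\mu$, and then Peter--Weyl on $U/K$ forces $f=0$. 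Both proofs hinge on the same rigidity phenomenon---propagating vanishing from an open set to all of $U$ via real-analyticity---but of different objects (the heat-smoothed function versus individual matrix coefficients). Your approach has the advantage of being entirely self-contained within the spherical representation framework already set up in the paper, avoiding the analytic input that $f*h_t$ is real-analytic; the paper's proof is shorter and makes the connection to the heat semigroup explicit.
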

\begin{proof}
Suppose the result is false. Then there exists a positive $\epsilon$ such that for all $uK\in Exp(\mathcal B(0, \epsilon))$ the function  $\mathcal S(l_uf)$ is zero. Hence, for all $t$ positive we have
\bes
\int_U \mathcal S(l_uf)(v) ~ h_t(v^{-1}) ~ dv =0.
\ees
Here $h_t$ denotes the heat kernel on $U/K$. This implies that $(f*h_t)(uK)$ is zero for all positive number $t$. In fact, 
\beas
\int_U \mathcal S(l_uf)(v)~ h_t(v^{-1})~dv
&=& \int_U \left(\int_K l_uf(kv)~dk\right)~h_t(v^{-1})~ dv\\
&= & \int_U l_uf(v)~h_t(v^{-1})~dv \\
&& \txt{(using change of variable } kv \mapsto v) \\
&=& \int_U f(uv)~ h_t(v^{-1})~ dv\\
&=& f*h_t(uK).
\eeas
It follows that $f*h_t$ vanishes on the open ball $Exp(\mathcal B(0, \epsilon))$, for all $t$ positive. Since the function $f\ast h_t$ is real analytic therefore vanishes identically. Consequently the function $f$ is the zero which contradicts our assumption. 
\end{proof}
We are now in a position to prove our main result.
\begin{proof}
[Proof of Theorem \ref{thmsym}.]
We first prove part (a) under the additional assumption that it is $K$-biinvariant. Therefore we now have $f\in C^\infty(U//K)$ with $\txt{supp } f \subset Exp(\overline{\mathcal B(0, r)})$, for some $0< r< R$ and the spherical transform $\widehat f$ satisfies the following 
\be \label{levdecay1}
|\widehat f(s\mu)| \leq C e^{-\psi(\|\mu\|)}, \:\:\:\: \txt{ for } \mu \in \Lambda^+(U/K), s\in W.
\ee
We will show that if $S$ is infinite then $f$ is the zero function. Since $f\in C_r^\infty(U//K)$ by Theorem \ref{pwthm}, (i), $\widehat f$ is an entire function in $PW_r(\mathfrak{a})$. Let $\{w_1, \cdots, w_d\}$ be the  fundamental weights defined by the relation (\ref{wdefn}). We now define an entire function $F$ on $\C^d$ by
\bes
F(z)= \widehat f (z_1 w_1 + \cdots + z_d w_d), \:\:\:\: \txt{ for all } z=(z_1, \cdots, z_d)\in \C^d.
\ees
Using property $(a)$ of the space $PW_r(\mathfrak a)$ it follows that
\bes
|F(z)|\leq |\widehat f(z_1 w_1+ \cdots + z_dw_d)|\leq C e^{r|z_1|\|w_1\|+ \cdots + r|z_d|\|w_d\|}\leq Ce^{\frac{\pi}{c_\ast}\|z\|},
\ees
for all $z\in \C^d$. We also have form (\ref{levdecay1}), (\ref{Lambdareln}) and (\ref{normeqv}) that
\bes
F(n_1, \cdots, n_d)= \widehat f(n_1 w_1 + \cdots +n_dw_d)=Ce^{-\psi\left(c_1\|n\|\right)}= Ce^{-\psi_{c_1}(\|n\|)}, 
\ees
for all $n\in \mathbb Z^d$, where $\psi_{c_1}$ is the dilation of $\psi$ by $c_1$.
Therefore, by Lemma \ref{mainlem} we get that $F$ is the zero function and hence so is $f$.

We shall now reduce the general case to the case of $K$-biinvariant functions by using the radialization operator $\mathcal S$. The idea is same as we did in \cite{BR} (see step 3 of the proof of Theorem 1.2 in \cite{BR}). If possible, let $f \in C^\infty(U/K)$ be a nonzero function supported on $Exp(\overline{\mathcal B(0, r)})$ and satisfies the estimate (\ref{levdecay}).
An application of Lemma \ref{nonzeroradiallem} for $\epsilon=(R-r)/2$ shows that there exists $u_0K\in Exp(\mathcal B (0,\epsilon))$ such that $\mathcal S(l_{u_0}f)$ is nonzero. 
The spherical Fourier transform of the $K$-biinvariant function $\mathcal S(l_{g_0}f)$ is given by 
\bea 
\widehat{\mathcal S\big(l_{u_0}f\big)}(\mu) & = & \int_{U/K} \mathcal S\big(l_{u_0}f \big)(u)~\overline{\psi_\mu (u)}~ du\nonumber\\
&=& \int_{U/K} \left(\int_K (l_{u_0}f)(ku) ~ dk \right) ~ \overline{\psi_\mu(u)}~ du\nonumber \\
&=& \int_{U/K} f \big(u_0u \big)~ \overline{\psi_\mu(u)}~ du\nonumber\\
&=& \int_{U/K} f(u) ~ \overline{\psi_\mu(u_0^{-1}u)}~du, \label{ftlghash}
\eea
using change of variable $ku \mapsto u$ and $K$-biinvariance of $\psi_\mu$. We shall now use the following fact (\cite[p.7]{OS1})
\be \label{piuexpn}
\pi_\mu(u)e_\mu= \int_{K} e^{-(\mu+2\rho)H(u^{-1}k)}~\pi_\mu(k)v_\mu~ dk, \:\:\:\: \txt{ for } u\in U\cap \mathcal V.
\ee
Using the relation (\ref{ftexprn}) and (\ref{piuexpn}), it follows from the definiton (\ref{psidefn}) of $\psi_\mu$ that for all $u \in Exp(\mathcal B (0, r))$ 
\bea \label{phiprod}
\psi_\mu(u_0^{-1}u) &=& \langle \pi_\mu(u)e_\mu, \pi_\mu(u_0)e_\mu \rangle \nonumber\\
&=& \int_{K} e^{-(\mu+2\rho)H(u^{-1}k)}~\langle \pi_\mu(k)v_\mu, \pi_\mu(u_0)e_\mu\rangle~ dk \nonumber\\
&=& \int_{K} e^{-(\mu+2\rho)H(u^{-1}k)}~e^{\mu\left(H(u_0^{-1}k)\right)}~dk.
\eea 
Therefore, since $\rho$ are purely imaginary on $\mathfrak a$, it follows from (\ref{ftlghash}) and (\ref{phiprod}) that
\beas
\widehat{\mathcal S\big(l_{u_0}f\big)}(\mu)&=&\int_{U/K} \int_K f(u) ~ e^{(-\bar{\mu}+2\rho)H(u^{-1}k)}~e^{\bar{\mu}\left(H(u_0^{-1}k)\right)}~dk~du \\
&=& \int_K \widetilde f(-\bar{\mu} + 2\rho, k) e^{\bar{\mu} \left(H(u_0^{-1}k)\right)} ~dk.
\eeas
Now, using that fact that $H$ is continuous and the hypothesis (\ref{levdecay}) it follows from above that
\bes
|\widehat{\mathcal S\big(l_{u_0}f\big)}(s\mu)| \leq C e^{-\psi(\|s\mu- 2\rho\|)}\leq C_\rho e^{-\psi \left(\frac{\|\mu\|}{2}\right)} \:\:\:\: \txt{ for } \mu \in \Lambda^+(U/K), s\in W
\ees 
Therefore, by the biinvariact case it follows that $\mathcal S(l_{u_0}f)$ is zero which contradicts the fact that $f$ is non-zero.

We shall now prove part (b). Here the series $S$ is convergent. Because of the identification of $\mathfrak a_\C^\ast$ with $\C^d$, it follows from Theorem \ref{lemconverse} that  
for $b\in \mathfrak a^\ast$ and $A>0$ there exists a non-zero entire function $F_b$ on $\mathfrak a_\C^\ast$ satisfying the following
\begin{enumerate} 
\item[i)] For each $N\in \N$ there exists $C_N>0$ such that 
\bes
|F_b(\lambda)|\leq C_N\frac{e^{A\|\Im \lambda\|}}{1+\|\lambda\|^N}, \:\:\: \txt{ for all } \lambda\in \mathfrak a_\C^\ast.
\ees
\item[ii)]The restriction of $F_b$ on $\mathfrak a^\ast$ satisfies
\bes
|F(x)|\leq Ce^{-\psi(\|x\|)}, \:\:\:\: \txt{for } x\in \mathfrak a^\ast,
\ees
\item[iii)] For all $\sigma\in S^{d-1}$ and $\lambda\in \mathfrak a_\C^\ast$
\bes
F_b\left(\sigma(\lambda+ b)-b\right)= F_b(\lambda).
\ees
\end{enumerate}
We now define 
\bes
F(\lambda)= F_{i\rho}(i\lambda), \:\:\:\: \txt{ for } \lambda \in \mathfrak a_\C^\ast.
\ees
It follows from property (i) and (iii) of the function $F_{i\rho}$ that $F\in PW_A(\mathfrak a)$ and consequently, we get from Theorem \ref{pwthm}, (ii) that $f\in C_c(U//K)$ such that 
\bes
\widehat f(\mu)= F(\mu), \:\:\:\: \txt{ for all } \mu\in \Lambda^+(U/K).
\ees
Using property (ii) of the function $F_{i\rho}$ it now follows from the above equation that $\widehat f$ satisfies the estimate (\ref{levdecayconv}).   
\end{proof}

\textbf{Acknowledgement.} We would like to thank Swagato K. Ray for suggesting this problem and for the many useful discussions during the course of this work. We would like to thank Sanjoy Pusti for several useful discussions during the course of this work.

\end{document}